\newcounter{intro}
\newtheorem{theo}[intro]{Theorem}
\newtheorem{thm}{Theorem}[section]
\newtheorem{prop}[thm]{Proposition}
\newtheorem{cor}[thm]{Corollary}
\theoremstyle{remark}
\newtheorem{rem}[thm]{Remark}
\newtheorem{rems}[thm]{Remarks}
\newtheorem{defi}[thm]{Definition}
\newtheorem*{merci}{Acknowledgements}
\numberwithin{equation}{section}   
\newcounter{counteroman}
\newcommand{\cref}[1]{Corollary~\ref{#1}}
\newcommand{\R}{\mathbb{R}}
\newcommand{\bg}{\mathbf{g}}
\newcommand{\bB}{\mathbb{B}}
\newcommand{\bS}{\mathbb{S}}
\newcommand{\mcC}{\mathcal{C}}
\newcommand{\mcM}{\mathcal{M}}
\newcommand{\bpm}{\begin{pmatrix}}
\newcommand{\epm}{\end{pmatrix}}
\let\eps=\varepsilon
\DeclareMathOperator{\cour}{\mathcal{R}}
\DeclareMathOperator{\Rm}{Rm}\DeclareMathOperator{\We}{W}
\DeclareMathOperator{\ricci}{Ricci}
\DeclareMathOperator{\scal}{Scal}
\DeclareMathOperator{\vol}{vol}
\def\cro#1#2{\mathrel{\langle {#1},{#2}\rangle}}
\def\and{\ \mathrm{ and}\,Ê}
\begin{document}

\title[Some old and new results about rigidity of critical metric]
{Some old and new results about rigidity of critical metric}

\author{Gilles Carron}
\address{Laboratoire de Math\'ematiques Jean Leray (UMR 6629), Universit\'e de Nantes, 
2, rue de la Houssini\`ere, B.P.~92208, 44322 Nantes Cedex~3, France}
\email{Gilles.Carron@math.univ-nantes.fr}
\date{\today}

\subjclass{53C20, 58E11}
\keywords{}

\begin{abstract}
We present a new proof of a recent $\epsilon$ regularity of G. Tian and J.Viaclovsky. Moreover, our idea also also works
with a kind of $L^p, p<\dim M/2$ assumptions on the curvature.
\end{abstract}

\maketitle
\section{Introduction}
In this paper, we obtain some new $\epsilon$-regularity and rigidity results for critical metrics and our arguments will also give new proof of classical $\epsilon$-regularity results. 

The class of critical metric has been introduced and study by G. Tian and J. Viaclovsky (\cite{TV_inven}) :
A Riemannian metric $g$ is said to be \textit{critical} if its Ricci curvature tensor satisfies a Bochner type formula :
$$\nabla^*\nabla\ricci_g+\cour(\ricci_g)=0$$
where $\nabla$ is the Levi-Civita connection and $\nabla^*$ is its differential adjoint and $\cour$ is a linear action of the Riemann curvature tensor on the space of symmetric $2$ tensor, in particular there is a constant $\Upsilon$ (that only depends on this action) such that :
\begin{equation}\label{constant}\forall h\in \odot^2 T^*_xM, \,\,\, |\cour(h)|\le \Upsilon |\Rm|\, |h|.\end{equation}
Examples of critical metric are Einstein metric, K\"ahler metric with constant scalar curvature, locally conformally flat metric with constant scalar curvature and in dimension $4$, Bach flat metric with constant scalar curvature.
Our main new result is the following $\epsilon$-rigidity result :
\begin{theo} There is a constant $\epsilon>0$ that only depends on the dimension $n$ and of the constant $\Upsilon$ appearing in the estimate
(\ref{constant}) such that if $(M^n ,g)$ be a complete Riemannian manifold whose metric is critical and such that its Riemann curvature tensor satisfies for some fixed point $o\in M$ :
$$|\Rm(y)|\le \frac{\epsilon^2}{d(o,y)^2}\,\,\,,$$
then the metric $g$ is flat : $\Rm=0$.
\end{theo}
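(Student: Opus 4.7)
The plan is to combine the critical equation with a Hardy-type inequality adapted to the manifold in order to force $\ricci \equiv 0$, and then to run the same argument a second time on the full Riemann tensor, which becomes divergence-free once the metric is Ricci-flat. The smallness of $\epsilon$ enters only to beat the Euclidean Hardy constant $(n-2)^2/4$ in the final absorption step.

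\textbf{Step 1: Differential inequality for $|\ricci|$.} Taking the inner product of the critical equation $\nabla^*\nabla \ricci + \cour(\ricci)=0$ with $\ricci$ and using the standard Bochner identity $\tfrac12\Delta|\ricci|^2 = |\nabla\ricci|^2 + \langle \ricci,\cour(\ricci)\rangle$, the pointwise bound (\ref{constant}) yields $\tfrac12\Delta|\ricci|^2 \ge |\nabla\ricci|^2 - \Upsilon|\Rm|\,|\ricci|^2$. Setting $f=|\ricci|$ and invoking Kato's inequality $|\nabla f|^2\le|\nabla\ricci|^2$, one obtains, distributionally, $-\Delta f \le \Upsilon|\Rm|\,f$, and the hypothesis then gives $-\Delta f \le \Upsilon\epsilon^2 f/r^2$ with $r=d(o,\cdot)$.

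\textbf{Step 2: Hardy inequality on $M$.} The curvature hypothesis is scale invariant and makes the manifold look almost Euclidean on every scale centered at $o$. More precisely, Hessian comparison applied to $r^2/2$ shows $\mathrm{Hess}(r^2/2) = \Id + O(\epsilon^2)$, so that the integration-by-parts proof of the Euclidean Hardy inequality goes through on $M$ and yields
\[
\int_M |\nabla u|^2 \, dv \;\ge\; H(n,\epsilon)\int_M \frac{u^2}{r^2}\, dv, \qquad u\in C_c^\infty(M\smallsetminus\{o\}),
\]
with $H(n,\epsilon)\to (n-2)^2/4$ as $\epsilon\to 0$.

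\textbf{Step 3: Absorption and iteration.} For a cutoff $\phi$ supported in $B(o,2R)$ and equal to $1$ on $B(o,R)$, test the inequality of Step 1 against $\phi^2 f$ and integrate by parts:
\[
\int|\nabla(\phi f)|^2 \;\le\; \Upsilon\epsilon^2\int\frac{(\phi f)^2}{r^2} + \int f^2|\nabla\phi|^2.
\]
Combined with Step 2,
\[
\bigl(H(n,\epsilon)-\Upsilon\epsilon^2\bigr)\int\frac{(\phi f)^2}{r^2} \;\le\; \int f^2 |\nabla\phi|^2.
\]
Since $f\le \mathrm{const}\cdot\epsilon^2/r^2$ at infinity and $f$ is bounded near $o$, the right-hand side tends to $0$ as $R\to\infty$ (using the almost-Euclidean volume growth from Step~2). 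If $\epsilon$ is small enough to have $H(n,\epsilon)>\Upsilon\epsilon^2$, we deduce $f\equiv 0$, hence $\ricci\equiv 0$. With Ricci flat, the second Bianchi identity gives $\delta\Rm=0$, so the Weitzenböck formula for harmonic curvature on a Ricci-flat manifold becomes $\nabla^*\nabla\Rm = Q(\Rm)$ with $|Q(\Rm)|\le C_n|\Rm|^2 \le C_n\epsilon^2|\Rm|/r^2$. Re-running Steps 1--3 with $f=|\Rm|$ produces $\Rm\equiv 0$.

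\textbf{Main obstacle.} The delicate point is Step 2: extracting a Hardy inequality with a quantitatively sharp constant from the sole pointwise curvature bound. One must verify that $\epsilon^2/r^2$ decay is sufficient to control the Hessian of the squared distance (or of a well-chosen model function) globally on the complete manifold, so that the $(n-2)^2/4$ constant is only perturbed by $O(\epsilon^2)$; everything else in the argument is a standard Moser-type integration by parts.
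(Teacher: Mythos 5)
Your proposal follows the Minerbe-style strategy (Bochner formula plus Hardy inequality plus absorption), which is not the route taken in the paper, and its key step fails. The crucial problem is Step 2: a Hardy inequality $\int_M|\nabla u|^2\ge H\int_M u^2/r^2$ with $H$ close to $(n-2)^2/4$ does \emph{not} follow from the sole pointwise bound $|\Rm|\le\epsilon^2/r^2$. The integration-by-parts proof you invoke needs a lower bound $\Delta r\ge (n-1-c\epsilon)/r$ (equivalently a two-sided Hessian control of $r^2/2$), which fails beyond the cut locus of $o$; more fundamentally, the validity and the constant of a Hardy inequality are governed by the volume growth of $(M,g)$, which is not controlled by a scale-invariant curvature bound. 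The flat manifolds $\R^{n-1}\times\bS^1$ satisfy the curvature hypothesis for every $\epsilon$ yet support a Hardy inequality only with constant about $(n-3)^2/4$, and none at all when $n=3$. This is precisely why Minerbe's Theorem~\ref{VM} assumes the volume lower bound $\vol B(o,R)/\vol B(o,r)\ge C(R/r)^\nu$ with $\nu>1$: that hypothesis is what yields his ($L^1$) Hardy inequality. Your Step 2 and your appeal to ``almost-Euclidean volume growth'' in Step 3 silently reintroduce such a hypothesis, whereas the whole point of the theorem is to dispense with it. A second, independent defect is the cutoff limit in Step 3: with $|\nabla\phi|\le C/R$ and $f\le C\epsilon^2/r^2$ one gets $\int f^2|\nabla\phi|^2\le C\epsilon^4R^{-6}\vol B(o,2R)$, which under Euclidean volume growth behaves like $R^{n-6}$ and does not tend to $0$ once $n\ge 6$.

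The paper's proof is entirely different and avoids integral inequalities altogether. By Proposition~\ref{prop:regularcritical}, a critical metric is $(\Lambda,1)$ regular: a curvature bound $1/r^2$ on a ball of radius $r$ forces $|\nabla\Rm|\le\Lambda/r^3$ on the half ball (elliptic estimates in harmonic coordinates applied to the coupled Bochner system for $\ricci_g$ and $\Rm$). If $\Rm\not\equiv 0$, the decay hypothesis lets $|\Rm|$ attain its maximum $1/r^2$ at some point $x$; the gradient bound then gives $|\Rm|\ge 1/(2r^2)$ on $B(x,3\epsilon r)$ with $\epsilon=1/(6\Lambda)$, while $|\Rm(x)|\le\epsilon^2/d(o,x)^2$ forces $d(o,x)\le\epsilon r$, so that points of $\partial B(x,3\epsilon r)$ satisfy $d(o,y)\ge 2\epsilon r$ and hence $|\Rm(y)|\le 1/(4r^2)$ --- a contradiction. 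You would need either to adopt this pointwise maximum argument or to add a genuine volume-growth hypothesis (thereby proving a weaker statement) for your integral scheme to close.
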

Our result generalizes a recent result of V. Minerbe (\cite{Vincent_GAFA}) who proved a similar result for Ricci flat metric with controlled volume growth :
\begin{thm}\label{VM}Assume that $(M^n,g)$ is complete Ricci flat Riemannian manifold $$\ricci_g=0$$ such that for some fixed point
 $o\in M$, some $\nu>1$ and some positive constant $C>0$ :
$$\forall R>r>0,\,\,\, \frac{\vol B(o,R)}{\vol B(o,r)}\ge C\left(\frac{R}{r}\right)^\nu$$
then there is a constant $\epsilon>0$  that only depends on $n,\nu,C$ such that if
$$|\Rm(y)|\le \frac{\epsilon^2}{d(o,y)^2}\,\,\,,$$
then the metric $g$ is flat : $\Rm=0$.
\end{thm}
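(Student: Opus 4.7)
The plan is to combine three ingredients: the Bochner formula for $\Rm$ on a Ricci-flat metric, a refined Kato inequality, and a Hardy-type inequality coming from the volume growth assumption. Since $\ricci_g=0$, one has $\nabla^*\nabla\Rm+\cour(\Rm)=0$ with $|\cour(\Rm)|\le\Upsilon|\Rm|^2$; and for the full Riemann tensor of a Ricci-flat metric there is a refined Kato inequality $|\nabla\Rm|^2\ge(1+\delta)|\nabla|\Rm||^2$ for some $\delta=\delta(n)>0$. Combining these, the function $u:=|\Rm|$ satisfies, in the weak sense away from $\{u=0\}$,
\[
u\Delta u\ \ge\ \delta\,|\nabla u|^2\ -\ \Upsilon\, u^3,
\]
so that for every exponent $p\ge 1-\delta$,
\[
\Delta(u^p)\ \ge\ -\,C(n,p)\,u^{p+1}.
\]
This means $v:=u^p$ is subharmonic up to a quadratic error driven by $u$ itself.

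Next I would derive from the volume growth hypothesis a weighted Hardy inequality of the form
\[
\int_M \frac{f^2}{d(o,\cdot)^2}\ \le\ H(n,\nu,C)\int_M |\nabla f|^2,\qquad f\in C_c^\infty(M\setminus\{o\}).
\]
Such a Hardy inequality is available on a complete manifold with a reverse doubling condition $\vol B(o,R)/\vol B(o,r)\ge C(R/r)^\nu$ (combined, if needed, with the Bishop–Gromov upper bound which holds because $\ricci_g=0$). Heuristically, the reverse doubling forces enough mass at infinity to ensure the weight $1/d^2$ is controlled by the Dirichlet energy.

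The core of the argument is then to pair the two. Testing the subharmonicity inequality against $\phi^2 v$ with a cutoff $\phi$, and applying Hardy to $\phi v$, one arrives at an estimate of the type
\[
\int_M \frac{\phi^2 v^2}{d(o,\cdot)^2}\ \le\ A\int_M \phi^2 v^2\,u\ +\ B\int_M v^2 |\nabla\phi|^2.
\]
The scale-invariant hypothesis $u\le\epsilon^2/d(o,\cdot)^2$ turns the first term on the right into $A\epsilon^2\int \phi^2 v^2/d^2$, which can be absorbed into the left-hand side as soon as $\epsilon$ is small enough (depending on $n,\nu,C$ and the exponent $p$). What remains is
\[
\int_M \frac{\phi^2 v^2}{d(o,\cdot)^2}\ \le\ C'\int_M v^2|\nabla\phi|^2.
\]
I would conclude by taking $\phi$ equal to $1$ on $B(o,R)$, supported in $B(o,2R)$, with $|\nabla\phi|\le 2/R$: on the annulus $v=u^p\le\epsilon^{2p}/d^{2p}$, and the Bishop–Gromov bound $\vol B(o,2R)\le c_n R^n$ yields a right-hand side of order $R^{n-4p-2}$. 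Choosing $p$ large enough (allowed since the constraint is only $p\ge 1-\delta$) makes this vanish as $R\to\infty$, forcing $v\equiv 0$ and hence $\Rm\equiv 0$.

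The main obstacle I expect is establishing the Hardy inequality from the raw reverse-doubling hypothesis at the claimed level of generality $\nu>1$: producing the correct weighted inequality at the universal constant level (and not merely in an annular form) is the delicate analytic input, and is precisely where Minerbe's use of weighted Poincaré/$L^2$-cohomology techniques plays a central role. The refined Kato constant $\delta(n)$ for $\Rm$ on Ricci-flat manifolds, while classical, also deserves care to guarantee that the final $\epsilon$ depends only on $n,\nu,C$.
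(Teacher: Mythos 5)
First, note that the paper does not actually prove Theorem \ref{VM}: it is quoted from Minerbe \cite{Vincent_GAFA}, and the text only records that Minerbe's proof rests on an \emph{$L^1$} Hardy inequality $\mu\int_M |f|/d(\cdot,o)\le\int_M|df|$ together with the Bochner equation for $\Rm$. The paper's own contribution (Theorem A, proved as Theorem \ref{theo:quadratic}) is a strictly stronger statement with no volume hypothesis at all, obtained by a completely different and much lighter argument: the decay hypothesis forces $|\Rm|$ to attain its maximum at some $x$ with $|\Rm(x)|=r^{-2}$, the $(\Lambda,1)$-regularity of Ricci-flat metrics gives $|\nabla\Rm|\le\Lambda r^{-3}$ near $x$, hence $|\Rm|\ge\tfrac12 r^{-2}$ on $B(x,\delta r)$, and since $d(o,x)\le\epsilon r$ this contradicts $|\Rm|\le\epsilon^2/d(o,\cdot)^2$ on $\partial B(x,\delta r)$ for $\delta=3\epsilon$. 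So your route is neither the paper's nor, in one crucial respect, Minerbe's.

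The genuine gap is the $L^2$ Hardy inequality $\int_M f^2/d(o,\cdot)^2\le H\int_M|\nabla f|^2$, which you invoke under the sole hypothesis of reverse doubling with exponent $\nu>1$. This is false in that generality: being an $L^2$ statement, it requires volume growth of order strictly greater than $2$. Concretely, if $\vol B(o,t)\asymp t^\nu$ with $1<\nu<2$ (consistent with reverse doubling of exponent $\nu$ and with the Bishop--Gromov upper bound), take $f$ equal to $1$ on $B(o,R)\setminus B(o,1)$ and logarithmically cut off on $B(o,R^2)\setminus B(o,R)$ and near $o$: then $\int|\nabla f|^2\lesssim R^{\nu-2}/(\log R)^2\to 0$ while $\int f^2/d^2$ stays bounded below by a fixed positive constant, so no constant $H$ can exist. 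This is precisely why Minerbe works with the $L^1$ Hardy inequality, which does hold for $\nu>1$. The rest of your scheme (absorbing $A\epsilon^2\int\phi^2v^2/d^2$ into the left-hand side, then letting $R\to\infty$ with $p>(n-2)/4$) is sound in structure, but it rests on the unavailable $L^2$ inequality and therefore only covers $\nu>2$; you flag this as the ``delicate input,'' but at the claimed level of generality it cannot be established, so the argument would have to be rebuilt in $L^1$ (or in a weighted $L^2$ setting) to recover the full range $\nu>1$. A side remark: once you take $p\ge 1$, as your final step requires, the refined Kato inequality is superfluous --- ordinary Kato already gives $\Delta(u^p)\ge -C u^{p+1}$ weakly.
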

The first step in the proof of theorem \ref{VM} was to establish a $L^1$ Hardy inequality  :
$$\forall f\in C_0^\infty(M), \mu(n,\nu,C)\int_M \frac{|f(x)|}{d(x,o)}d\vol_g(x)\le \int_M |df(x)| d\vol_g(x).$$
And the final step was to use the Bochner type equation 
$$\nabla^*\nabla\Rm+\cour(\Rm)=0$$ satisfied by the Riemann curvature tensor of a Ricci flat metric.

There are many other $\epsilon$-rigidity results that relies on a priori functional inequality (such as a Sobolev inequality or as the above Hardy inequality) and a integral bounds on the curvature (cf. for instance \cite{Berard}, \cite{Singer}, \cite{shen1},\cite{shen2}, \cite{Ni},\cite{IS},\cite{PRS}, \cite[Theorem 7.1]{TV_inven},\cite{XuZhao},\cite{Kim} ). Such a result
has been shown recently for critical metric by G.Tian and J.Viaclovsky in dimension $4$ and by X-X. Chen and B.Weber in higher dimension (\cite{TV_inven}, \cite{ChenWeber} :
\begin{thm}\label{regula} There are positive constants $\epsilon>0$ and $C>0$ that depend only on the dimension $n$ and of the constant $\Upsilon$ appearing in the estimate
(\ref{constant}) such that when $(M^n ,g)$ be a complete Riemannian manifold whose metric is critical and such that for some $x\in M$ and $r>0$,  the geodesic ball $B(x,r)$ satisfies the Sobolev inequality :
$$\forall f\in C_0^\infty(B(x,r))\,\, ,\,\, \left(\int_{B(x,r)} |f(y)|^{\frac{2n}{n-2}}d\vol(y)\right)^{1-\frac2n}\le A\int_{B(x,r)}|df(y)|^2  d\vol(y)$$
and the following bound for the curvature tensor :
$$A^{\frac n2}\int_{B(x,r)}|\Rm(y)|^{\frac n2}d\vol_g(y)<\epsilon\, $$ then
$$\sup_{B(x,\frac12 r)} |\Rm|\le A\, \frac{C}{ r^2}\, \left(\int_{B(x,r)} |\Rm|^{\frac n2}(y)d\vol_g(y)\right)^{\frac 2n}.$$
\end{thm}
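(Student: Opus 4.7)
The plan is a Moser iteration for $u=|\Rm|$ driven by the Sobolev inequality, with the $L^{n/2}$ smallness hypothesis playing the role of the absorption constant at the critical step. The critical-metric hypothesis enters only once, at the beginning, to produce a Bochner-type inequality for the full curvature tensor.

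The first task is to combine $\nabla^*\nabla\ricci_g+\cour(\ricci_g)=0$ with the second Bianchi identity (which slaves the Weyl component to $\nabla\ricci_g$) and apply Kato's inequality, in order to deduce the distributional bound
$$\Delta u\ \ge\ -\,C(n,\Upsilon)\,u^2\qquad\text{on }B(x,r),$$
where $\Delta$ denotes the analyst's Laplacian. This plays the role of the Bochner equation $\nabla^*\nabla\Rm+\cour(\Rm)=0$ used in the Ricci-flat case in the proof of \tref{VM}.

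For any $p\ge 1/2$ and any cutoff $\eta\in C_0^\infty(B(x,r))$, testing the Bochner inequality against $u^{2p-1}\eta^2$ and using Cauchy--Schwarz to absorb the cross term in $\nabla\eta$ yields the Caccioppoli estimate
$$\int|\nabla(u^p\eta)|^2\,d\vol\ \le\ C(n,\Upsilon,p)\left(\int u^{2p+1}\eta^2+\int u^{2p}|\nabla\eta|^2\right)d\vol.$$
H\"older with exponents $(n/2,\,n/(n-2))$ followed by the Sobolev hypothesis bound the bad term by
$$\int u^{2p+1}\eta^2\,d\vol\ \le\ A\,\|u\|_{L^{n/2}(B(x,r))}\int|\nabla(u^p\eta)|^2\,d\vol,$$
and the smallness hypothesis rewrites as $A\,\|u\|_{L^{n/2}(B(x,r))}\le\epsilon^{2/n}$, which for $\epsilon$ chosen small in terms of $n,\Upsilon$ (and $p$) absorbs this term into the left side. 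One further application of the Sobolev inequality then produces the reverse-H\"older step
$$\left(\int u^{2p\chi}\eta^{2\chi}\,d\vol\right)^{1/\chi}\ \le\ A\,C(n,\Upsilon,p)\int u^{2p}|\nabla\eta|^2\,d\vol,\qquad \chi=\tfrac{n}{n-2}.$$

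Starting from $2p_0=n/2$ and iterating with $p_{k+1}=\chi p_k$ on a shrinking sequence of concentric balls $B(x,r_k)\searrow B(x,r/2)$ equipped with standard cutoffs satisfying $|\nabla\eta_k|\le C\,2^k/r$, the usual Moser bookkeeping (the geometric series $\sum\chi^{-k}$ converges) converts this reverse-H\"older step into the advertised $L^\infty$ bound with the correct powers of $A$ and $r$. The main obstacle is the very first step: the critical equation controls only $\ricci_g$, whereas the conclusion concerns the full tensor $\Rm$, so one must differentiate the critical equation and invoke the second Bianchi identity to produce a single Bochner inequality for $|\Rm|$; this is where the universal constant $\Upsilon$ of (\ref{constant}) enters the final constant $C$, and it is the only place where the precise critical structure is used.
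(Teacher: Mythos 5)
There is a genuine gap at the step you yourself flag as ``the main obstacle'' and then pass over: for a critical metric the full curvature tensor does \emph{not} satisfy a clean differential inequality $\Delta|\Rm|\ge -C(n,\Upsilon)|\Rm|^2$. The hypothesis controls only the Ricci tensor, via $\nabla^*\nabla\ricci_g+\cour(\ricci_g)=0$. Applying the second Bianchi identity (as in the proof of Proposition \ref{prop:regularcritical}) yields for the full tensor
$$\nabla^*\nabla \Rm+\cour(\Rm)=-d^\nabla\widetilde{\,\,d^\nabla\ricci_g\,\,}\,,$$
whose right-hand side involves \emph{two} covariant derivatives of $\ricci_g$ and is not pointwise bounded by $|\Rm|^2$; Kato's inequality cannot remove it. Consequently your single reverse-H\"older step for $u=|\Rm|$ does not close: the term $\int\langle d^\nabla\widetilde{d^\nabla\ricci_g},\cdot\rangle$ must be integrated by parts and then requires separate a priori control of $\nabla\ricci_g$, which forces one to run the iteration on the coupled system for $(\ricci_g,\Rm)$ with carefully interleaved estimates. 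That coupling is precisely what makes the Tian--Viaclovsky and Chen--Weber arguments ``intricate,'' and it is the step your proposal assumes away. (If the metric were Einstein or had harmonic curvature, $(d^\nabla)^*\Rm=0$ and your Bochner inequality would be correct; for general critical metrics it is not.)

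Independently of this, note that the paper does not prove Theorem \ref{regula} this way at all: it quotes the statement from \cite{TV_inven} and \cite{ChenWeber}, and its announced ``new proof'' deliberately avoids Moser iteration. The route taken is: (a) Proposition \ref{prop:regularcritical}, showing critical metrics are $(\Lambda,1)$ regular by local elliptic estimates in harmonic coordinates on balls where the curvature is already bounded (this is where the coupled system is handled, but only under a pointwise curvature bound, which is much easier than the integral setting); (b) the observation that the Sobolev inequality forces the lower volume bound $\vol B(y,s)\ge v s^n$; and (c) Theorem \ref{n2reg}, where the point selection lemma produces an almost-maximal point of $|\Rm|$, the $(\Lambda,1)$ regularity keeps $|\Rm|$ bounded below on a definite ball around it, and the volume lower bound then forces $\int|\Rm|^{n/2}$ to exceed $\epsilon v$ --- a contradiction. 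Your approach, if the coupled iteration were carried out in full, would reprove the original Tian--Viaclovsky/Chen--Weber result; but as written the first step is unjustified and the argument does not stand.
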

Such a result implies the following $\epsilon$-rigidity result :
\begin{cor}Let $(M^n ,g)$ be a complete Riemannian manifold whose metric is critical . Assume that$(M^n,g)$ satisfies the Sobolev inequality :
$$\forall f\in C_0^\infty(M)\,\, ,\,\, \left(\int_{M} |f(y)|^{\frac{2n}{n-2}}d\vol(y)\right)^{1-\frac2n}\le A\int_{M}|df(y)|^2  d\vol(y).$$
If the curvature tensor satisfies
$$A^{\frac n2}\int_{M}|\Rm(y)|^{\frac n2}d\vol_g(y)<\epsilon\, $$
then $(M^n,g)$ is isometric to the Euclidean space $\R^n$.
\end{cor}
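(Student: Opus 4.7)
The plan is to deduce the corollary directly from \tref{regula} applied on arbitrarily large balls, then to promote the resulting flatness to isometry with $\R^n$ by exploiting the Sobolev inequality.

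First I would fix a point $x \in M$ and a radius $r > 0$. Every function in $C_0^\infty(B(x,r))$ extends by zero to $C_0^\infty(M)$, so the global Sobolev inequality restricts verbatim to $B(x,r)$ with the same constant $A$. Likewise $\int_{B(x,r)}|\Rm|^{n/2}\,d\vol_g \le \int_M|\Rm|^{n/2}\,d\vol_g$, so the small-curvature hypothesis persists on the ball. \tref{regula} then yields
$$\sup_{B(x,r/2)} |\Rm| \;\le\; \frac{AC}{r^2}\left(\int_M |\Rm|^{n/2}\,d\vol_g\right)^{2/n}.$$
Sending $r \to \infty$, the right-hand side tends to zero, so $\Rm \equiv 0$ on $M$ and $g$ is flat.

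It remains to identify the complete flat manifold $(M,g)$ with $\R^n$. The global Sobolev inequality implies the Euclidean-type lower bound $\vol B(x,R) \ge c(n,A)\,R^n$ for every $x$ and every $R > 0$, via the Federer--Fleming passage from Sobolev to the isoperimetric inequality $|\partial \Omega| \ge c|\Omega|^{(n-1)/n}$ and integration. A complete flat $n$-manifold has the form $\R^n/\Gamma$ for a torsion-free, properly discontinuous group $\Gamma$ of Euclidean isometries, and such a quotient has volume growth $\vol B(x,R) \sim c_\Gamma R^{n-k}$, where $k$ is the rank of the maximal translation subgroup of $\Gamma$. The Euclidean lower bound rules out $0 < k < n$; the case $k = n$ would make $M$ compact, contradicting $\vol B(x,R)\to\infty$; and $k = 0$ together with torsion-freeness forces $\Gamma = \{e\}$, since a finite group of Euclidean isometries has a common fixed point and so a free action forbids nontrivial finite subgroups. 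Therefore $M \cong \R^n$ isometrically.

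The genuinely substantive step is the last one---\emph{flat plus Sobolev implies Euclidean}---which combines the classification of flat manifolds with the volume growth forced by the Sobolev inequality. The preceding steps are a direct consequence of \tref{regula} once one notes that the global Sobolev inequality and the global integrability of $|\Rm|^{n/2}$ localize to any ball with the same constants.
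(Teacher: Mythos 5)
Your proposal is correct and follows exactly the route the paper intends: the paper states the corollary as an immediate consequence of \tref{regula} without writing out a proof, and your argument---localize the global Sobolev inequality and the $L^{n/2}$ bound to $B(x,r)$, apply \tref{regula}, let $r\to\infty$ to get $\Rm\equiv 0$, then use the Euclidean volume growth forced by the Sobolev inequality together with the Bieberbach classification of complete flat manifolds to conclude $M\cong\R^n$---is the standard filling-in of those details. The only remark worth adding is that the final step (flat plus $\vol B(x,r)\ge c(n,A)r^n$ implies isometric to $\R^n$) uses the Sobolev-to-volume lower bound that the paper itself invokes elsewhere via the references to Akutagawa and Carron, so your write-up is fully consistent with the paper's toolkit.
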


In another paper \cite{TV_cmh}, G. Tian and J. Viaclovsky were able to replace the hypothesis on the Sobolev inequality by a uniform lower bound on the volume growth of geodesic balls :
$$\forall y\in B(x,r),\,\, \forall s\in (0,r)\,\,:\,\, \vol B(y,s)\ge v s^n$$
It is known that the Sobolev inequality implies such a uniform lower bound ((\cite{Aku} or \cite{Carron_smf}). The proof of this improvement used as a preliminary result the above $\epsilon$ regularity result (theorem \ref{regula}) and hence it relied on the intricate de Georgi-Nash-Moser iteration scheme developed in \cite{TV_inven} or \cite{ChenWeber}. Our idea leads to a direct proof of this improvement that do not used this iteration scheme and moreover we are able to give some $L^p$ $\epsilon$ regularity/rigidity result, for instance we'll obtain the following :
\begin{theo}There is a constant $\epsilon>0$ that only depends on $n,p$ and of  constant $\Upsilon$ appearing in the estimate
(\ref{constant}) such that when $(M^n ,g)$ be a complete Riemannian manifold whose metric is critical and such that 
any geodesic ball $B\subset M$ (with radius $r(B)$) satisfies \footnote{$\omega_n$ is the volume of the unit Euclidean ball.}:
$$\frac{r^{2p}}{\vol B}\int_B |\Rm(y)|^{p}d\vol_g(y)<\epsilon\, $$
then the metric $g$ is flat : $\Rm=0$.\end{theo}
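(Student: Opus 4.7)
The plan is to adapt the Minerbe strategy used in the proof of \tref{VM}: combine a Hardy-type functional inequality with the critical metric Bochner equation, but now working with an $L^p$ Hardy inequality suited to the scale invariant hypothesis of the theorem, so that the De Giorgi--Nash--Moser iteration of \cite{TV_inven}, \cite{ChenWeber} can be avoided, as announced by the author.

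First I would observe that the hypothesis $r^{2p}(\vol B)^{-1}\int_B|\Rm|^p d\vol<\epsilon$ on every geodesic ball is scale invariant, and since $|\ricci|\le C_n|\Rm|$ pointwise, a volume comparison argument (essentially a quantitative Bishop--Gromov under an $L^p$ Ricci bound) produces a uniform Euclidean volume growth lower bound $\vol B(x,r)\ge v\,r^n$ with $v=v(n,p,\Upsilon,\epsilon)>0$. By the Akutagawa--Carron equivalence this upgrades to a uniform local Sobolev inequality on every ball of $M$, and combined with the curvature hypothesis it yields a local weighted $L^p$ Hardy inequality of the form
$$\int_{B(x,r)}\frac{|u|^p}{r^{2p}}\,d\vol\le \Lambda\int_{B(x,r)}|\nabla^2 u|^p\,d\vol + \text{cutoff terms},\qquad u\in C_c^\infty(B(x,r)),$$
with $\Lambda$ depending only on $n,p,v$.

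Next I would apply the critical metric equation in the form of the distributional inequality $\Delta|\ricci|\ge -\Upsilon|\Rm|\,|\ricci|$, which is a direct consequence of $\nabla^*\nabla\ricci+\cour(\ricci)=0$ together with Kato's inequality. Testing this against $|\ricci|^{p-1}$ times a cutoff supported in $B(x,r)$, integrating by parts, and applying H\"older's inequality together with the hypothesis $\int_B|\Rm|^p d\vol\le \epsilon\, (\vol B)\,r^{-2p}$ produces an inequality of the form
$$\int_{B(x,r/2)}\frac{|\ricci|^p}{r^{2p}}\,d\vol\le C\,\epsilon^{1/p}\int_{B(x,r)}\frac{|\ricci|^p}{r^{2p}}\,d\vol+\text{cutoff error}.$$
For $\epsilon$ small, a standard absorption and telescoping argument as $r\to\infty$ forces $\ricci\equiv 0$ on $M$. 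Once $\ricci=0$, the second Bianchi identity gives the analogous Bochner equation $\nabla^*\nabla\Rm+\cour(\Rm)=0$ for the full Riemann tensor, and repeating the same argument with $|\Rm|$ in place of $|\ricci|$ yields $\Rm\equiv 0$, which is the desired flatness.

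The main obstacle I expect is the first step: deriving the uniform local $L^p$ Hardy inequality (or equivalently the uniform Euclidean lower volume bound and its Sobolev consequence) from only the scale invariant $L^p$ smallness of $|\Rm|$ with $p<n/2$, without any independent volume growth assumption. This requires a bootstrap between the curvature hypothesis and comparison geometry that is more delicate than its pointwise or $L^{n/2}$ counterparts in \cite{TV_inven,ChenWeber}, and it is here that the new idea of the paper should enter decisively; once it is in place the Bochner manipulation in Step~2 is essentially a softer $L^p$ analog of Minerbe's argument and the reduction in Step~3 is automatic.
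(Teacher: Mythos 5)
Your proposal does not follow the paper's route, and its first step contains a genuine gap that the rest of the argument cannot survive. You propose to extract a uniform Euclidean lower volume bound $\vol B(x,r)\ge v\,r^n$ from the scale-invariant $L^p$ smallness of the curvature, then upgrade it to a Sobolev/Hardy inequality and run a Minerbe-style Bochner--Kato absorption. But no such volume bound follows from the hypothesis: the flat cylinder $\R^{n-1}\times \bS^1$ satisfies $\Rm=0$, hence satisfies the hypothesis on every geodesic ball for every $\epsilon>0$, yet has $\vol B(x,r)\sim r^{n-1}$ for large $r$. This is consistent with the conclusion of the theorem, which is only $\Rm=0$ and not ``isometric to $\R^n$''; but it shows that the implication ``$L^p$ curvature smallness $\Rightarrow$ maximal volume growth'' on which your whole scheme rests is false. (The quantitative Bishop--Gromov results under integral Ricci bounds also require $p>n/2$ and give upper, not lower, volume comparisons, so they cannot be invoked for the range $p<n/2$ at issue here.) Without the volume lower bound there is no Sobolev inequality, hence no Hardy inequality, and the absorption argument in your second step has nothing to absorb into.

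The actual mechanism of the paper is entirely different and is precisely designed so that no volume hypothesis is needed: the normalization by $\vol B$ in the quantity $\frac{r(B)^{2p}}{\vol B}\int_B|\Rm|^p$ does the work that the lower volume bound does in the $L^{n/2}$ case (\tref{n2reg}). Concretely: critical metrics are $(\Lambda,1)$ regular (\pref{prop:regularcritical}, by elliptic estimates in harmonic coordinates); if $\Rm$ did not vanish, the point selection lemma (\pref{PSL}) produces a point $y$ with $|\Rm(y)|=\rho^{-2}$ and $|\Rm|\le 4\rho^{-2}$ on a definite ball around $y$; the regularity estimate bounds $|\nabla\Rm|$ there, so $|\Rm|\ge \tfrac12\rho^{-2}$ on a ball $B(y,\delta\rho)$ with $\delta=\delta(\Lambda)$; applying the hypothesis to that very ball gives
$$\epsilon>\frac{(\delta\rho)^{2p}}{\vol B(y,\delta\rho)}\int_{B(y,\delta\rho)}|\Rm|^p\ \ge\ \frac{(\delta\rho)^{2p}}{(2\rho^{2})^{p}}\ =\ \frac{\delta^{2p}}{2^{p}},$$
a contradiction once $\epsilon$ is chosen below this dimensional constant. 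No Sobolev inequality, no Moser iteration, and no integral Bochner identity enter; the only analytic input is the pointwise regularity estimate for critical metrics.
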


Our argument also leads to a new and direct proof of the following result of M. Anderson :
 \begin{thm}There is a positive constant $\epsilon_n>0$ such that if $(M^n,g)$ is a complete Ricci flat manifold
satisfying :
$$\lim_{r\to\infty} \frac{\vol B(x,r)}{r^n}\ge \omega_n(1-\epsilon_n)$$
then $(M^n,g)$  is isometric to the Euclidean space $\R^n$.\end{thm}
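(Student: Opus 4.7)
The plan is to reduce Anderson's result to Minerbe's theorem (Theorem~\ref{VM}) by upgrading the asymptotic volume hypothesis to the pointwise curvature decay required there, using the $\epsilon$-regularity of Theorem~\ref{regula}.

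First, since $\ricci_g \equiv 0$, the Bishop--Gromov monotonicity gives that $r \mapsto \vol B(y,r)/(\omega_n r^n)$ is non-increasing and its limit at infinity is independent of $y$. The asymptotic volume hypothesis therefore propagates to every point and every scale:
$$\vol B(y,r) \geq (1-\epsilon_n)\, \omega_n\, r^n \qquad \forall\, y \in M,\ r > 0.$$
In particular the volume doubling hypothesis of Theorem~\ref{VM} holds with $\nu = n$ and a constant depending only on $\epsilon_n$; moreover, by the usual Croke/Anderson inequalities, a Sobolev inequality with constant $A$ close to the sharp Euclidean one holds on every geodesic ball of $M$.

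Second, I would aim to establish the pointwise decay
$$|\Rm(y)| \leq \frac{\epsilon^2}{d(o,y)^2}$$
demanded by Theorem~\ref{VM}. For $y$ close to $o$ this is automatic since $|\Rm|$ is bounded on compact sets. For $y$ with $R := d(o,y)$ large, I would apply Theorem~\ref{regula} on the ball $B(y,R/2)$ (on which the Sobolev inequality of the previous step is in force): provided
$$A^{\frac{n}{2}}\int_{B(y,R/2)} |\Rm|^{\frac{n}{2}}\, d\vol_g < \epsilon,$$
one obtains
$$|\Rm(y)| \leq \frac{CA}{R^2}\left(\int_{B(y,R/2)} |\Rm|^{\frac{n}{2}}\, d\vol_g\right)^{\frac{2}{n}}.$$

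The main obstacle is therefore to control the scale-invariant $L^{n/2}$ integral of the curvature on every such far-out ball by a small multiple of $\epsilon_n$. This is a quantitative rigidity statement for Bishop--Gromov in the spirit of Cheeger--Colding / Cheeger--Tian: the volume defect $\omega_n - \vol B(y,r)/r^n$, which is globally $\leq \omega_n\epsilon_n$ and tends to zero on large far-out balls, should dominate the local $L^{n/2}$ norm of $\Rm$ via an almost volume-cone / almost metric-cone argument. An attractive alternative, more in line with the philosophy of this paper, is to invoke the new $L^p$ $\epsilon$-rigidity (the final theorem of the introduction) for some $p < n/2$: here the scale-invariant $L^p$ smallness is easier to extract directly from the near-Euclidean volume comparison.

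Once the pointwise decay is in hand, Theorem~\ref{VM} immediately yields $\Rm \equiv 0$. A complete flat manifold $\R^n/\Gamma$ whose asymptotic volume ratio is at least $(1-\epsilon_n)\omega_n$ must have $\Gamma$ trivial, since any non-trivial discrete $\Gamma \subset \mathrm{Iso}(\R^n)$ forces the ratio to $0$. Hence $(M,g)$ is isometric to Euclidean space $\R^n$.
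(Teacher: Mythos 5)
Your reduction runs into trouble at exactly the step you flag yourself, and that step is in fact the whole difficulty. Everything up to the application of Theorem~\ref{regula} is fine as a conditional statement, but the hypothesis you need for it --- that the scale-invariant quantity $\int_{B(y,R/2)}|\Rm|^{n/2}$ is small on every far-out ball --- cannot be ``extracted directly from the near-Euclidean volume comparison''. There is no elementary implication from a volume defect $\omega_n-\vol B(y,r)/r^n\le\omega_n\epsilon_n$ to smallness of the $L^{n/2}$ (or any $L^p$, $p\le n/2$) norm of the curvature; quantitative almost-rigidity statements of that type for Bishop--Gromov with curvature conclusions are far deeper than anything used in this paper and are not available here. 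Invoking the paper's own $L^p$ rigidity theorem does not help either, since its hypothesis is again a scale-invariant integral curvature bound on every ball that you have no means of verifying from the volume hypothesis alone. So the proposal is circular at its core: you need to know the curvature is small in an integral sense before you can conclude it is small pointwise, and the volume hypothesis by itself hands you neither.

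The paper's proof goes in the opposite direction. It establishes the quantitative estimate of Theorem~\ref{volcur} by contradiction: if $|\Rm|$ were not small somewhere in $B(x,r/2)$, the point selection lemma (Proposition~\ref{PSL}) produces a point $y$ where $|\Rm(y)|=\rho^{-2}$ is $4$-almost maximal on a ball of radius comparable to $\rho$; elliptic regularity of the Ricci-flat equation then bounds $\nabla^j\Rm$ for $j\le 7$ on that ball; the Gray--Vanhecke expansion $\vol B(y,s)=\omega_ns^n\bigl(1-c_n|\Rm(y)|^2s^4+O(s^6)\bigr)$, with the error term controlled by the derivative bounds via the Jacobi equation, then forces a \emph{definite} volume defect at the scale $s\sim\mu^2\rho$; and Bishop--Gromov monotonicity propagates that defect to all larger scales, contradicting the almost-maximal volume ratio. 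The corollary you are asked to prove then follows by letting $r\to\infty$ in the resulting estimate $\sup_{B(x,r/2)}|\Rm|\le C(n)\,\epsilon_n^{1/4}/r^2$, with no need for Minerbe's theorem or for Theorem~\ref{regula} at all. In short: the paper converts a pointwise curvature lower bound into a volume defect; you are trying to convert a volume defect into an integral curvature bound, and that converse step is the missing --- and essential --- ingredient. (Your opening Bishop--Gromov propagation and your closing remark that a flat manifold with almost-Euclidean volume growth is $\R^n$ are both correct.)
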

This result was used by Anderson to prove a $\epsilon$-regularity result based on volume growth for metric with bounded Ricci curvature ; for Einstein metric, this result implies some uniform bound on the Riemann curvature tensor. In fact we obtain a new proof and a new formulation of this estimate :
\begin{theo}There are constant $\epsilon(n)>0$ and $C(n)$ such that if
$(M^n,g)$ is a complete Ricci flat manifold and $x\in M$ and $r>0$ are such that 
$$\vol B(x,r)\ge \omega_n(1-\epsilon_n)r^n$$ then
$$\sup_{B(x,r/2)}|\Rm|\le \frac{C(n)}{r^{2}}\sup_{y\in B(x,\frac34 r)}\left( \frac{\omega_n r^n-\vol B(y,r)}{r^n}\right)^{\frac14}.$$
\end{theo}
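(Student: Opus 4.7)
The overall strategy is to combine the almost-Euclidean volume growth with the $\epsilon$-regularity Theorem~\ref{regula}. The argument splits into three steps.

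First, a quantitative form of Bishop-Gromov's comparison gives, for $\epsilon_n$ small enough, a nearly Euclidean volume ratio on all balls $B(z,s)$ with $z \in B(x,3r/4)$ and $s \leq r/4$, namely $\vol B(z,s) \geq (1 - \tau(\epsilon_n))\omega_n s^n$ with $\tau(\epsilon_n) \to 0$ as $\epsilon_n \to 0$. Under such almost Euclidean volume growth together with $\ricci_g = 0$, a Sobolev inequality holds on every such ball with Sobolev constant close to the sharp Euclidean one. All hypotheses of Theorem~\ref{regula} are thus in place, apart from the $L^{n/2}$-smallness of $\Rm$.

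Second, applying Theorem~\ref{regula} on each ball $B(z, r/2)$ with $z \in B(x, r/2)$ yields
$$|\Rm(z)| \leq \frac{C(n)}{r^2}\left(\int_{B(z, r/2)} |\Rm|^{n/2}\, d\vol_g \right)^{2/n},$$
provided the integral on the right is less than a dimensional constant $\epsilon$. It therefore suffices to prove the key estimate
$$\int_{B(z, r/2)} |\Rm|^{n/2}\, d\vol_g \leq C(n)\, r^n \left(\frac{\omega_n r^n - \vol B(z, r)}{r^n}\right)^{n/8}$$
for every $z \in B(x, r/2) \subset B(x, 3r/4)$.

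Third, this $L^{n/2}$-bound is the heart of the matter. I would combine the Bochner-Weitzenb\"ock identity $\nabla^*\nabla \Rm + \cour(\Rm) = 0$ (valid for Ricci flat metrics with $|\cour(\Rm)| \leq \Upsilon |\Rm|^2$) with a quantitative form of Bishop's inequality: for Ricci flat metrics the volume defect of a ball $B(z,r)$ admits a representation as a weighted integral over $B(z,r)$ of the squared tracefree second fundamental form of the distance spheres from $z$, obtained by integrating the radial Riccati equation along geodesics emanating from $z$. Via the Gauss-Codazzi relations, this quantity controls certain components of $\Rm$ in an $L^2$ sense; the Bochner equation together with a Moser-type iteration, driven by the near-Euclidean Sobolev inequality of the first step, upgrades this $L^2$ bound to an $L^{n/2}$ bound. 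The main obstacle is to track exponents carefully through this iteration in order to recover the specific power $n/8$ (equivalently, $1/4$ after applying the $2/n$ of $\epsilon$-regularity); ensuring that the Moser iteration does not introduce excessive losses is the technical core of the argument.
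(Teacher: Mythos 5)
Your Step 3 contains the real gap, and it is not a bookkeeping issue: the inequality
$\int_{B(z,r/2)}|\Rm|^{n/2}\,d\vol_g\le C(n)\,r^n\bigl(\tfrac{\omega_n r^n-\vol B(z,r)}{r^n}\bigr)^{n/8}$
to which you reduce the theorem is essentially as strong as the theorem itself, and the route you sketch for it does not close. Integrating the radial Riccati equation expresses the volume deficit through the traceless second fundamental form of the distance spheres, but via Gauss--Codazzi this only controls the components $R(\cdot,\partial r)\partial r$ and $R(\cdot,\cdot)\partial r$, i.e.\ radial curvatures, and only in an integrated sense along each geodesic; the purely tangential components of $\Rm$ enter through the intrinsic curvature of the spheres, which the volume deficit does not see, and Ricci-flatness gives nothing extra here. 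Moreover, Moser iteration on $\nabla^*\nabla\Rm+\cour(\Rm)=0$ improves integrability only \emph{after} one has $L^{n/2}$-smallness of $|\Rm|$ (that is precisely the content of Theorem~\ref{regula}); it cannot promote an $L^2$ bound on some components to an $L^{n/2}$ bound on the full tensor when $n>4$. The known quantitative passages from almost-maximal volume to curvature control (Anderson, Colding, Cheeger--Colding) all proceed by compactness and contradiction, which is exactly why they cannot produce the explicit exponent $\tfrac14$.

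The paper's proof is entirely different and pointwise rather than integral: arguing by contraposition, if $|\Rm|(z)\ge\mu^2/r^2$ for some $z\in B(x,r/2)$, the point selection lemma (Proposition~\ref{PSL}) yields $y\in B(x,\tfrac34 r)$ with $|\Rm(y)|=\rho^{-2}$ and $|\Rm|\le 4\rho^{-2}$ on a ball of radius comparable to $\mu\rho$; the $(\Lambda,7)$-regularity of Einstein metrics bounds $\nabla^j\Rm$ there, which makes the Gray--Vanhecke expansion $\vol B(y,s)=\omega_ns^n\bigl(1-c_n|\Rm(y)|^2s^4+O(s^6)\bigr)$ effective with a controlled remainder; choosing $s=\eta_n\mu^2\rho$ produces a definite volume deficit of order $\mu^8$ at that scale, which Bishop--Gromov monotonicity transports to scale $r$. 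The exponent $\tfrac14$ is exactly the ratio between $\mu^2=r^2|\Rm(z)|$ and the deficit $\sim\mu^8$. To salvage your strategy you would need an independent proof of your key $L^{n/2}$ estimate, and I do not see one that is easier than the theorem it is meant to imply.
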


Our idea is quite versatile and can be used to obtain other rigidity and regularity results. In a future work, we intend to consider applications of these ideas to the question of convergence of Einstein/critical metric in dimension $n>4$ in the spirit of results of J. Cheeger, T. Colding, G. Tian \cite{CCT_GAFA} or of G. Tian and J. Viaclovsky \cite{TV_adv}. What nowadays is missing is an answer to a question of M. Anderson (cf. \cite[Rem 2, p. 475]{And_JAMS} and G. Tian \cite{Tian_ICM} about the geometry of Einstein/critical Riemannian manifold with maximal volume growth and whose curvature satisfies some bound on :
$$\sup_r \left(r^{4-n}\int_{B(x,r)}|\Rm|^2\right).$$
\begin{merci}
I would like to thank E. Aubry, P. Castillon, R. Mazzeo, V. Minerbe  for helpful discussions.
I was partially supported by  by the grant GeomEinstein 06-BLAN-0154.
\end{merci}
\section{Some Definitions and useful tools}

\subsection{Regular metric}
\begin{defi}
We say that  a Riemannian manifold $(M^n,g)$ is $(\Lambda,k)$ regular or that the Riemannian metric $g$  satisfies $(\Lambda,k)$ regularity estimates
 if for any $x\in M$ and any $r>0$ and $\eps\in(0,1)$ such that
$$\sup_{ÊB(x,\eps r)} |\Rm|\le \frac{1}{r^2}$$ then for all $j=1, \hdots , k$
\begin{equation}\label{eq:regularestimate}
\sup_{ÊB\left(x,\frac{1}{2}\eps r\right)} |\nabla^j\Rm|\le \frac{\Lambda}{(\eps r)^j r^2}.
\end{equation}
\end{defi}

\begin{rems}
\begin{enumerate}[a)]
\item  The choice of half the radius in the estimate (\ref{eq:regularestimate}) is arbitrary, indeed it is easy to show that the $(\Lambda,k)$ regularity estimate implies the following : if for some $x\in M$, $r>0$ and $\eps\in (0,1)$ we have
$$\sup_{ÊB(x,\eps r)} |\Rm|\le \frac{1}{r^2}$$ then for all $\delta\in (0,1)$ and all $j=1, \hdots , k$, we have
$$
\sup_{ÊB\left(x,\delta\, \eps r \right)} |\nabla^j\Rm|\le \frac{\Lambda}{\eps^j ((1-\delta)r)^{2+j}}.
$$
\item This  condition of regularity is clearly invariant by scaling : if a metric $g$ satisfies $(\Lambda,k)$ regularity estimates then
for any positive constant $h$, the metric $h^2 g$ satisfies $(\Lambda,k)$ regularity estimates. 

\item Hence, a metric $g$ satisfies $(\Lambda,k)$ regularity estimates if and only if for every positive constant $h$ the metric
$g_h=h^2g$ satisfies the following estimates :
for any $x\in M$ and any  $\eps\in(0,1)$ such that
$$\sup_{ÊB_{g_h}(x,1)} |\Rm_{g_h}|\le \eps^2$$ then for all $j=1, \hdots , k$
 $$\sup_{ÊB_{g_h}\left(x,\frac{1}{2}\right)} |\nabla^j\RmÊ_{g_h}|\le  \Lambda\, \eps ^2.$$
\end{enumerate}
\end{rems}

Sometimes, we will used a weaker assumption on the metric :
\begin{defi}
We say that a Riemannian manifold $(M^n,g)$ is weakly $(\Lambda,k)$ regular if
 if for any $x\in M$ and any $r>0$ such that
$$\sup_{ÊB(x, r)} |\Rm|\le \frac{1}{r^2}$$ then for all $j=1, \hdots , k$

$$ \sup_{ÊB\left(x,\frac{ r}{2}\right)} |\nabla^j\Rm|\le \frac{\Lambda}{ r^{j +2}}.$$
\end{defi}

\subsection{Examples of Regular metric}
\subsubsection{Einstein metric and metric with harmonic curvature} When $(M^n,g)$ is Einstein $$\ricci_g=(n-1)\tau g$$
then the curvature satisfies an elliptic equation 
\begin{equation}\label{eq:Boch_curvature}
\nabla^*\nabla \Rm+\cour(\Rm)=0\end{equation}
where $\cour$ is a certain action of the curvature operator on the space of curvature tensors.
Indeed the Bianchi identies implies that
$$d^\nabla\Rm=0$$ and the fact that the Ricci curvature is zero implies that the curvature tensor (viewed as a $2$-forms valued in symmetric tensors)
is coclosed :
$$(d^\nabla)^*\Rm=0\,\,,$$ hence the above equation (\ref{eq:Boch_curvature}) is a consequence of a Bochner formula (cf \cite[Proposition 4.2]{Bourguignon_inv})
$$(d^\nabla)^*d^\nabla+d^\nabla(d^\nabla)^*=\nabla^*\nabla +\cour.$$
So that any harmonic Riemann tensor  :
$$(d^\nabla)^*\Rm=0$$ satisfies the  Bochner formula  (\ref{eq:Boch_curvature}). This implies the following :
\begin{prop}
If $(M^n,g)$ is a Riemannian manifold with harmonic curvature :
$$\left(d^\nabla\right)^*\Rm=0$$ then $(M^n,g)$ is $(\Lambda,k)$ regular for a constant $\Lambda$ that only depends on $n$ and $k$.
\end{prop}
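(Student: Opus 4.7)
By Remark c) the problem is invariant under rescaling, so the plan is to reduce to the normalized setting: it suffices to produce $\Lambda=\Lambda(n,k)$ such that whenever $\sup_{B(x,1)}|\Rm|\le \eps^2$ (with $\eps\in(0,1)$), one has $\sup_{B(x,1/2)}|\nabla^j\Rm|\le \Lambda\,\eps^2$ for $j=1,\dots,k$. Since the metric has bounded sectional curvature in $B(x,1)$, injectivity radius estimates and the geometry of the ball are under control, which legitimises the forthcoming cut-off / Moser arguments.

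The main input is the elliptic equation $\nabla^*\nabla\Rm+\cour(\Rm)=0$ already derived from the Bianchi identity and the hypothesis $(d^\nabla)^*\Rm=0$. I would first exploit this through the standard tensorial Bochner identity: applied to $\Rm$ it yields
\begin{equation*}
\tfrac12\Delta|\Rm|^2=|\nabla\Rm|^2-\langle\cour(\Rm),\Rm\rangle\ge |\nabla\Rm|^2-\Upsilon|\Rm|^3.
\end{equation*}
Multiplying by a cut-off supported in $B(x,1)$ and equal to $1$ on $B(x,3/4)$, I get a Caccioppoli-type estimate bounding $\int|\nabla\Rm|^2$ by $\int|\Rm|^2$ (plus cubic terms that are absorbed because $|\Rm|\le\eps^2$ is small).

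The iterative step is the heart of the argument. Differentiating the Bochner equation $j$ times and using that $[\nabla^*\nabla,\nabla^j]$ is, schematically, a sum of terms of the form $\nabla^a\Rm*\nabla^b\Rm$ with $a+b\le j$, one obtains
\begin{equation*}
\nabla^*\nabla(\nabla^j\Rm)=\sum_{a+b\le j}\nabla^a\Rm*\nabla^b\Rm ,
\end{equation*}
where $*$ denotes a contraction with universal coefficients (depending on $n$ and $\Upsilon$). Hence $|\nabla^j\Rm|$ is a subsolution of
\begin{equation*}
\tfrac12\Delta|\nabla^j\Rm|^2\ge |\nabla^{j+1}\Rm|^2-C(n,j)\sum_{a+b\le j}|\nabla^a\Rm|\,|\nabla^b\Rm|\,|\nabla^j\Rm|.
\end{equation*}
Assuming by induction uniform $L^\infty$ bounds on $|\nabla^i\Rm|$ for $i<j$ on slightly larger balls, this reduces to a linear differential inequality for $|\nabla^j\Rm|^2$ with bounded coefficients. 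Using the Kato inequality and Moser iteration (or a De~Giorgi--Nash argument) with nested cut-offs going from radius $r_{j}$ to $r_{j+1}<r_j$ between $1/2$ and $1$, one passes from $L^2$ to $L^\infty$ and obtains the desired $\sup_{B(x,1/2)}|\nabla^j\Rm|\le\Lambda\eps^2$.

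The delicate point is bookkeeping the commutators $[\nabla^*\nabla,\nabla^j]$ and ensuring that the constants at each step of the induction depend only on $n$, $k$, and $\Upsilon$ rather than on a priori quantities one is trying to control. Once this combinatorial accounting is made precise, the Moser iteration is routine because the curvature bound $|\Rm|\le\eps^2\le 1$ gives uniform control on the Sobolev constant, the injectivity radius, and the volume density on $B(x,1)$. No harmonic coordinates are strictly necessary for this tensorial calculation; everything can be done invariantly with the Bochner formula.
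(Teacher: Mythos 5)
Your overall scheme (Caccioppoli estimates from the Bochner equation, commutator bookkeeping, Moser iteration to pass from $L^2$ to $L^\infty$) is a legitimate route in principle, but it contains one genuine gap and it is methodologically the opposite of what the paper does. The gap: you assert that the bound $\sup_{B(x,1)}|\Rm|\le\eps^2$ puts ``injectivity radius estimates and the geometry of the ball under control'', and later that this gives ``uniform control on the Sobolev constant, the injectivity radius, and the volume density''. This is false: a two-sided curvature bound gives no lower bound on the injectivity radius (collapsed almost-flat examples such as thin nilmanifolds or Berger spheres), and without non-collapsing the local Sobolev constant on $B(x,1)$ is not controlled, so the Moser iteration you invoke does not run with constants depending only on $n$ and $k$. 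The paper's proof dodges exactly this point by first pulling back the metric by the exponential map, $\exp_x\colon \bB(0,1)\subset T_xM\to B(x,1)$, which is an immersion because $|\Rm|\le\eps^2<1$ excludes conjugate points; all estimates are then carried out for $\bg=\exp_x^*g$ on the Euclidean unit ball, where the local geometry is genuinely controlled (this is the standard Cheeger--Gromov--Taylor unwrapping). Your argument needs this step, or an equivalent one, before any Sobolev inequality can be used.

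Beyond that, your route diverges from the paper's. The paper deliberately avoids De Giorgi--Nash--Moser iteration here (avoiding that machinery is advertised as one of the points of the whole article) and instead runs a Shi-type maximum-principle argument on the pulled-back ball: it introduces the auxiliary function $v=(33\eps^4+|\Rm|^2)|\nabla\Rm|^2$ and a cut-off $\varphi$ satisfying $|d\varphi|^2\le\varphi$ and $|\Delta\varphi|\le C(n)$, and evaluates $\Delta(\varphi v)$ at an interior maximum; the cross term $-2\langle d|\Rm|^2,d|\nabla\Rm|^2\rangle$ is absorbed by the $-2|\nabla\Rm|^4$ and $-2(33\eps^4+|\Rm|^2)|\nabla^2\Rm|^2$ terms, yielding $\sup_{\bB(0,1/2)}|\nabla\Rm|^2\le C(n)\eps^4$ directly, with higher derivatives handled by the same trick plus commutation rules. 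If you insist on the iteration route, you must (i) do the exponential pull-back first, and (ii) carry out the commutator bookkeeping you acknowledge as ``delicate'' --- but you would then be reproducing precisely the heavy scheme the paper is written to replace, rather than the elementary pointwise argument it actually uses.
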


\begin{proof}This regularity result can be proved with some rather classical elliptic regularity estimates, along the line of the proof of regularity of critical metric (see the proof of proposition \ref{prop:regularcritical}). But we can also use less elaborate tools using only
the maximum principle (following for instance the argumentation of W. Shi, \cite[section 7]{Shi_JDG1}).

Indeed assume that $g$ is a complete Riemannian metric with harmonic curvature. If we assume that on a geodesic ball $B(x,1)\subset M$, and for some $\eps\in (0,1)$, we have the following uniform bound on the curvature :
$$\sup_{ÊB(x, 1)} |\Rm|\le \eps^2$$
Then the exponential map is a local diffeomorphism form the unit Euclidean ball $\bB(0,1)\subset (T_xM,g_x)$ to $B(x,1)$ :
$$\exp_x\,:\, \bB(0,1)\rightarrow B(x,1)$$
Then metric $\bg=\exp_x^*g$ has also a harmonic curvature tensor and its curvature tensor is bounded by $\eps^2$.
We will proved the regularity estimate in the ball $\bB(0,1)$ endowed with the metric $\bg=\exp_x^*g$. Hence we work now on the Riemannian manifold $(\bB(0,1), \bg)$
The Bochner's formulae imply that 
\footnote{$C(n)$ will be a constant that only depends on $n$ and that can vary from one estimate to another.}:

$$\Delta |\Rm|^2\le C(n)\epsilon^2|\Rm|^2-2|\nabla \Rm|^2$$
$$\and \,\,\,\, \Delta |\nabla\Rm|^2\le C(n)\epsilon^2|\nabla\Rm|^2-2|\nabla^2 \Rm|^2.$$

We define $v=(33\epsilon^4+ |\Rm|^2) |\nabla\Rm|^2$ and consider the function
$\varphi=2u-u^2$ where $$u(y)=\begin{cases}
1& \, \mathrm{if}\,Ê |y|\le 1/2\\
\left(3-4|y|^2\right)^2 &\, \mathrm{if}\,Ê 1/2\le |y|\le 3/4\\
0&\, \mathrm{if}\,  3/4\le |y|\end{cases}$$
then we have
$$|\Delta\varphi|\le C(n)\and |d\varphi|^2\le \varphi \,\,\, .$$
Hence at a point where the function $\varphi v$ is maximal we have
$$vd\varphi+\varphi dv=0\and 0\le \Delta (\varphi v)$$
Hence at such a point :
\begin{equation*}\begin{split}
0&\le v\Delta \varphi-2\cro{d\varphi}{dv}+\varphi\Delta v\\
&\le v\Delta \varphi+2\frac{|d\varphi|^2}{\varphi} v+\varphi\Delta v\\
&\le C(n) v+\varphi\Delta v\,\,\,\, .
\end{split}\end{equation*}

A quick computation shows that
\begin{equation*}\begin{split}
\Delta v&\le (C(n)\epsilon^2   |\Rm|^2-2 |\nabla\Rm|^2) |\nabla\Rm|^2+(33\epsilon^4+ |\Rm|^2) (C(n)\epsilon^2 |\nabla\Rm|^2-2|\nabla^2\Rm|^2)\\
&\,\,\,\,\,\,\,\,\,\,\,\,\,\,\,\,\,\,\,\,\,\,\,\,\,\,\,\,\,\,\,\,\,\,\,\,\,\,\,\,\,\,\,\,\,\,\,\,\,\,\,\,\,\,\,\,\,\,\,\,\,\,\,\,\,\,\,\,-2\cro{d |\Rm|^2}{d |\nabla\Rm|^2}\\
&\le C(n)\epsilon^2v-2|\nabla\Rm|^4-2(33\epsilon^4+ |\Rm|^2) |\nabla^2\Rm|^2+8 |\Rm| |\nabla\Rm|^2 |\nabla^2\Rm|\\
&\le C(n)\epsilon^2 v-|\nabla\Rm|^4-2(33\epsilon^4+ |\Rm|^2) |\nabla^2\Rm|^2+16|\Rm|^2 |\nabla^2\Rm|^2\\
&\le C(n)\epsilon^2 v-|\nabla\Rm|^4-2(33\epsilon^4+ |\Rm|^2) |\nabla^2\Rm|^2+16\eps^4 |\nabla^2\Rm|^2\\
&\le C(n)\epsilon^2 v-|\nabla\Rm|^4\end{split}\end{equation*}
Hence at a point where the function $\varphi v$ is maximal, we have
$$0\le C(n)v+ C(n)\epsilon^2\varphi v-\varphi\ |\nabla\Rm|^4 \,\, ,$$
so that we have at such a point :
$$\varphi^2\frac{v^2}{(34\eps^4)^2}\le \varphi^2\ |\nabla\Rm|^4\le C(n) \varphi v\, \, .$$
This estimate implies the following
$$ \sup_{\bB(0,1)}\varphi v\le C(n) \eps^8,$$
and with the definition of $v= (33\epsilon^4+ |\Rm|^2) |\nabla\Rm|^2$ , we get :
$$\sup_{\bB(0,\frac12)}|\nabla\Rm|^2\le  C(n) \eps^4.$$

The estimate on the higher order covariant derivative of the Riemann tensor $|\nabla^j\Rm|$ can be obtained with the same argument using 
commutation rules between the covariant derivative $\nabla$ and the rough Laplacian $\nabla^*\nabla.$
\end{proof}

We have already seen that Einstein metric have harmonic Riemann tensor, another example of metric with harmonic tensor are locally conformally flat metric with constant scalar curvature.

\subsubsection{Critical metric}
As noticed by G.Tian and J.Viaclovsky \cite{TV_inven} , another large class of Riemannian metric satisfies these regularity estimates:
\begin{defi}We say that a Riemannian metric is critical if its Ricci tensor satisfies an Bochner's type equality :
\begin{equation}\label{eq:Bochner_ricci}
\nabla^*\nabla \ricci_g+\cour(\ricci_g)=0\,\,\, .
\end{equation}
where $\cour$ is a linear action of the Riemann curvature tensor on the space of symmetric $2$ tensor, 
\end{defi}
\begin{prop}\label{prop:regularcritical} A  manifold $(M^n,g)$ endowed with a complete critical metric is 
is $(\Lambda,k)$ regular for a constant $\Lambda$ that only depends on $n$ and $k$ and on the Bochner formula (\ref{eq:Bochner_ricci}).
\end{prop}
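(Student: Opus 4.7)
The proof goes via elliptic regularity in harmonic coordinates, in the style of Tian-Viaclovsky \cite{TV_inven}. Unlike the harmonic-curvature case, where a direct Bochner formula for $\Rm$ is available, the Bochner equation \eqref{eq:Bochner_ricci} here controls only the Ricci tensor; the passage from Ricci-estimates to bounds on $\nabla^j\Rm$ must go through the metric itself. By the scaling invariance recorded in the third remark above, it is enough to prove the following: if $g$ is critical on $B(x,1)$ with $\sup_{B(x,1)}|\Rm|\le\epsilon^2$ and $\epsilon$ is small enough depending only on $n$, $k$, $\Upsilon$, then $\sup_{B(x,1/2)}|\nabla^j\Rm|\le \Lambda\,\epsilon^2$ for all $j\le k$.

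The first step is to introduce harmonic coordinates $y=(y^1,\ldots,y^n)$ on a slightly smaller ball. The bound $|\Rm|\le\epsilon^2$ makes them available on a ball of definite size (Jost-Karcher harmonic-radius estimate), in which the components $g_{ij}$ are $C^{1,\alpha}$-close to $\delta_{ij}$. Two elliptic systems are then simultaneously at one's disposal: the quasilinear metric equation
$$g^{kl}\partial_k\partial_l g_{ij}=-2\,(\ricci)_{ij}+Q_{ij}(g,\partial g),$$
in which $Q$ is polynomial in $g$, $g^{-1}$, $\partial g$ and vanishes to second order at the flat metric; and the Bochner equation \eqref{eq:Bochner_ricci}, which in coordinates reads as a linear second-order elliptic system for $(\ricci)_{ij}$ with top symbol $g^{kl}$ and zeroth-order term $\cour(\ricci)$ bounded pointwise by $\Upsilon|\Rm||\ricci|$.

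The second step is a Schauder bootstrap, alternating between the two systems on a sequence of concentric shrinking balls. Starting from the inputs $\|g\|_{C^{1,\alpha}}\le 1+C\epsilon$ and $\|\ricci\|_{C^{0}}\le \epsilon^2$, one iterates: interior Schauder estimates applied to the Bochner equation promote $\ricci$ by two orders of regularity, and the improved bound on $\ricci$ plugged into the metric equation promotes $g$ by two orders of regularity. After $k+1$ such steps one obtains $\|g\|_{C^{k+2,\alpha}(B(x,1/2))}\le \Lambda(n,k,\Upsilon)$, and since $\nabla^j\Rm$ is polynomial in the coordinate derivatives of $g$ up to order $j+2$, this proves the proposition qualitatively.

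The main difficulty is the quantitative tracking of the smallness factor $\epsilon^2$ through the bootstrap, since a bare Schauder estimate yields only boundedness. This is handled by noticing that both $Q$ and $\cour(\ricci)$ are genuinely quadratic in small quantities ($Q$ vanishes to second order at the flat metric and $|\cour(\ricci)|\le \Upsilon |\Rm||\ricci|$), so the right-hand sides in the Schauder inequalities carry a factor of $\epsilon^2$ at every step; propagating this factor through the iterations yields the sharp bound $|\nabla^j\Rm|\le \Lambda\,\epsilon^2$ on $B(x,1/2)$ and the claimed $(\Lambda,k)$-regularity.
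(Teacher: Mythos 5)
Your overall frame (Jost--Karcher harmonic charts plus an elliptic bootstrap alternating between the Bochner equation for $\ricci_g$ and the equation $g^{kl}\partial_k\partial_l g_{ij}=-2\ricci_{ij}+Q(g,\partial g)$) is reasonable and qualitatively yields $\|g\|_{C^{k+2,\alpha}}\le \Lambda$, hence bounds on $\nabla^j\Rm$. But the quantitative step --- which you correctly identify as the main difficulty --- is asserted rather than proved, and as stated it does not go through. The Schauder estimate for the metric equation reads $\|g-\delta\|_{C^{2,\alpha}}\le C\bigl(\|\ricci\|_{C^{0,\alpha}}+\|Q(g,\partial g)\|_{C^{0,\alpha}}+\|g-\delta\|_{C^{0}}\bigr)$: the low-order term $\|g-\delta\|_{C^0}$ and the quadratic term $Q$ carry a factor $\eps^2$ only if you already know that $\|g-\delta\|_{C^{1}}\lesssim\eps^2$ \emph{in the harmonic chart on a ball of definite size}. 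The bare $C^{1,\alpha}$ harmonic-radius estimate does not give this: it gives $\|g-\delta\|_{C^{1,\alpha}}\le Q_0-1$ for a fixed $Q_0$ on a ball of radius $\delta_n$, and making $Q_0-1$ of order $\eps^2$ shrinks the radius with $\eps$. One can in fact prove $\|\partial g\|_{C^0}\le C(n)\eps^2$ on a definite ball (via Jacobi-field comparison in normal coordinates and then controlling the harmonic correction), but this is a genuine additional input that your argument needs and does not supply; "$Q$ vanishes to second order at the flat metric'' only gives $|Q|\lesssim|\partial g|^2$, which is useless without smallness of $\partial g$. A second, more minor point: at the first step the metric is only $C^{1,\alpha}\cap W^{2,n}$, so the zeroth-order coefficients of the Bochner system (the terms $\partial\Gamma\cdot\ricci$ produced when $\nabla^*\nabla$ is written in coordinates) lie only in $L^n$, and the first regularity gain must come from $L^p$ (Calder\'on--Zygmund) theory rather than Schauder.

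The paper's proof sidesteps the whole issue by never asking the metric components to be quantitatively close to flat. Using the second Bianchi identity it derives a second elliptic equation, this time for the full curvature tensor,
$$\nabla^*\nabla \Rm+\cour(\Rm)=-d^\nabla\widetilde{\,\,d^\nabla\ricci_g\,\,},$$
and runs $W^{2,n}$ interior estimates on the coupled system for $(\ricci_g,\Rm)$ in the harmonic charts: first $\|\ricci\|_{W^{2,n}}\le C(n)\|\Rm\|_{L^n}\le C(n)\eps^2$, then $\|\Rm\|_{W^{2,n}}\le C(n)\bigl(\|\Rm\|_{L^n}+\|\nabla^2\ricci\|_{L^n}\bigr)\le C(n)\eps^2$, and so on. Because both equations are \emph{linear} in $\ricci$ and $\Rm$, the factor $\eps^2$ propagates automatically through every iteration; the harmonic coordinates are needed only to provide uniformly elliptic, uniformly bounded coefficients, not smallness. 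If you want to keep your metric-bootstrap route, you must first establish $\|g-\delta\|_{C^{1,\alpha}(\bB(p,\delta_n))}\le C(n)\eps^2$; otherwise you should switch to the coupled system for $(\ricci_g,\Rm)$ as above.
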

\begin{proof} First,
Indeed  using twice the Bianchi identities, we have (see :\cite[formula 3.7]{Bourguignon_inv}) :
$$\nabla^*\nabla \Rm+\cour(\Rm)=d^\nabla(d^\nabla)^*\Rm =-d^\nabla\widetilde{\,\,d^\nabla\ricci_g\,\,}$$
where $\widetilde{\,\,d^\nabla\ricci_g\,\,}(X,Y,Z)=d^\nabla\ricci_g(Y,Z,X)$.
Now we can use  the coupled elliptic system :
\begin{equation}\label{eq:coupled-Bochner}\left\{
\begin{array}{l}
\nabla^*\nabla \Rm+\cour(\Rm)=-d^\nabla\widetilde{\,\,d^\nabla\ricci_g\,\,}\,\, ,\\
\nabla^*\nabla \ricci_g+\cour(\ricci_g)=0\,\,\, .
\end{array}\right.
\end{equation}

By scaling, we assume that on some geodesic ball $B(x,1)$ and for some $\eps\in (0,1),$ 
we have the following uniform bound on the curvature :
$$\sup_{ÊB(x, 1)} |\Rm|\le \eps^2$$
Then the exponential map is a local diffeomorphism form the unit Euclidean balls $\bB(0,1)\subset (T_xM,g_x)$ to $B(x,1)$
$$\exp_x\,:\, \bB(0,1)\rightarrow B(x,1)$$
Then metric $\bg=\exp_x^*g$ is also critical and has its curvature tensor bounded by $\eps^2$.
We will proved the regularity estimate in the ball $\bB(0,1)$ endowed with the metric $\bg=\exp_x^*g$. Hence we work now on the Riemannian manifold $(\bB(0,1), \bg)$ :

Moreover according to  J.Jost and H.Karcher \cite{JK}, M.Anderson \cite[remark : 2.3i)  ]{And_inv} there is a constant $\delta_n$ such that  around each point $p\in \bB(0,1/2)$ 
there is a harmonic chart on the ball of radius 
$\delta_n$ 
$${\bf x}\,:\, \bB(p,\delta_n)\rightarrow \R^n$$ such that the metric ${\bf x}_*\bg$ has uniform $\mcC^{1,\alpha}$ and $W^{2,n}$ estimate.

Looking at the elliptic equation (\ref{eq:Bochner_ricci}) in these coordinates implies that we have a uniform $W^{2,n}$ bound
$$\|\ricci\|_{W^{2,n}(\bB(p,\delta_n/2))}\le C(n) |\Rm\|_{L^n(\bB(p,\delta_n))}\le C(n)\eps^2$$
So that we get an estimate
$$\|\nabla^2\ricci\|_{L^{n}(\bB(p,\delta_n/2))}\le C(n)\eps^2.$$
If we look now at the elliptic equation
$$\nabla^*\nabla \Rm+\cour(\Rm)=-d^\nabla\widetilde{d^\nabla\ricci_g}$$
then we get similarly 
\begin{equation*}
\begin{split}
\|\Rm\|_{W^{2,n}(\bB(p,\delta_n/4))}&\le 
C(n)\left[ \|\Rm\|_{L^n(\bB(p,\delta_n/2))}+ \|\nabla^2\ricci\|_{L^{n}(\bB(p,\delta_n/2))}\right]\\
&\le C(n)\eps^2.
\end{split}
\end{equation*}  In particular we have a uniform estimate on  $\nabla \Rm$ on these balls  $\bB(p,\delta_n/4)$, 
$$\sup_{\bB(0,\frac{2+\delta_n}{4})}|\nabla\Rm |\le C(n)\eps^2.$$

These argument can be bootstrapped because a uniform bound on $\nabla^j\Rm, j=0,\hdots k$ implies uniform
$\mcC^{k+1,\alpha}$ and $W^{k+2,n}$ estimate on the  metric ${\bf x}_*\bg$ and these estimates on the metric imply a 
$W^{k+2,p}$ estimate on the curvature tensor.
\end{proof}
Some example of critical metric :
\begin{enumerate}[i)]
\item A K\"ahler metric with constant scalar curvature is critical. Indeed if $(M,\omega)$ is a K\"ahler manifold with Ricci form $\rho$, The ricci form is closed of type $(1,1)$ and we have
$$d^*\rho=-d^c\scal_g$$
When the scalar curvature is constant, the Bochner formula on $(1,1)$ forms implies that
$$0=(dd^*+d^*d)\rho=\nabla^*\nabla \rho+\cour(\rho).$$
\item Another important example is the case of Bach flat metric in dimension 4.
\end{enumerate}

\subsection{The point selection lemma}
The following proposition can be found in \cite[Appendix H]{Kleiner_Lott}  and is also known as the 1/4-almost maximum lemma (see the $\lambda$-maximum lemma in
\cite[p. 256]{Gromov}.
\begin{prop}\label{PSL}Assume that  $\varphi\,:\, X\rightarrow \R_+$ is a continuous function on a complete locally compact metric space $(X,d)$. If for some $x_0\in X$ and $r>0$ we have
 $$\varphi(x_0)\ge \frac{1}{r^2}$$ 
 then for any $A>0$ there is a point $\overline{x}\in B(x_0,2Ar)$ such that
$$\varphi(\overline{x})\ge \frac{1}{r^2}$$ and
$$\forall z\in B\left(\overline{x}, A\, \varphi(\overline{x})^{-1/2}\right),\ \varphi(z)\le 4 \varphi(\overline{x}).$$
\end{prop}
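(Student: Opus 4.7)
The plan is a standard iterative point‑picking argument: one either verifies the conclusion at $x_0$, or else uses the failure of the conclusion to locate a point where $\varphi$ is strictly larger (by a factor of $4$), then repeats. The key quantitative feature is that the resulting sequence moves geometrically slower at each step, so that it remains in the ball $B(x_0,2Ar)$ and is Cauchy; completeness plus continuity of the finite‑valued function $\varphi$ will force the procedure to terminate.

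More precisely, I would set $x_0$ as given and construct inductively a sequence $(x_k)_{k\ge 0}$ as follows. If $x_k$ already satisfies the desired conclusion (i.e.\ $\varphi(z)\le 4\varphi(x_k)$ for every $z\in B(x_k,A\varphi(x_k)^{-1/2})$), then I stop and take $\overline{x}=x_k$. Otherwise, by the negation of the conclusion there exists $x_{k+1}\in B(x_k,A\varphi(x_k)^{-1/2})$ with $\varphi(x_{k+1})>4\varphi(x_k)$. By induction one then has
\[
\varphi(x_k)\ge 4^k\varphi(x_0)\ge \frac{4^k}{r^2},\qquad \varphi(x_k)^{-1/2}\le 2^{-k} r,
\]
and consequently
\[
d(x_k,x_{k+1})\le A\varphi(x_k)^{-1/2}\le A\,2^{-k}r.
\]

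Summing the geometric series gives, for every $k$,
\[
d(x_0,x_k)\le \sum_{j=0}^{k-1}A\,2^{-j}r\le 2Ar,
\]
so that all $x_k$ lie in the closed ball $\overline{B(x_0,2Ar)}$. The same estimate shows that $(x_k)$ is a Cauchy sequence, hence by completeness it converges to some $x_\infty\in \overline{B(x_0,2Ar)}$. But continuity of $\varphi$ would then force $\varphi(x_\infty)=\lim_k \varphi(x_k)=+\infty$, contradicting $\varphi\colon X\to\R_+$.

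Therefore the construction must stop after finitely many steps, producing the required $\overline{x}$ in the open ball $B(x_0,2Ar)$ (since the strict inequality in the geometric sum is preserved as long as we do not reach the limit). The point $\overline{x}$ satisfies $\varphi(\overline{x})\ge 4^k/r^2\ge 1/r^2$ and, by construction at the stopping step, $\varphi(z)\le 4\varphi(\overline{x})$ for all $z\in B(\overline{x},A\varphi(\overline{x})^{-1/2})$, which is exactly the claim. I do not expect any real obstacle: the only delicate point is verifying that the geometric gain factor $4$ in the $\varphi$‑values is matched by the factor $2^{-k}$ in the distance estimates so that the total displacement stays bounded by $2Ar$, but this is immediate from $\varphi(x_k)^{-1/2}\le 2^{-k}r$.
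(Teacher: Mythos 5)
Your proof is correct and follows essentially the same iterative point-picking scheme as the paper: if the conclusion fails at $x_k$, choose $x_{k+1}$ with $\varphi(x_{k+1})>4\varphi(x_k)$, and use $\varphi(x_k)^{-1/2}\le 2^{-k}r$ to keep the whole sequence inside $B(x_0,2Ar)$. The only (harmless, and arguably cleaner) difference is at the termination step, where you argue via the Cauchy property, completeness, and continuity of the $\R_+$-valued $\varphi$, whereas the paper invokes compactness of $B(x_0,2Ar)$ and remarks that boundedness of $\varphi$ on that ball would suffice.
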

\proof Starting from $x_0$ we build inductively a sequence $x_0,x_1...$ \\
If $x_l$ is such that
on $B\left(x_0, d(x_0,x_l)+A\,\varphi(x_l)^{-1/2}\right)$ 
$$\varphi\le 4 \varphi(x_l)$$ then we define
$$x_{l+1}=x_l.$$

If it is not the case then we can find $x_{l+1}$ such that 
$$ d(x_0,x_{l+1})\le  d(x_0,x_l)+\frac{A}{\sqrt{\varphi(x_l)}}$$
and $$\varphi(x_{l+1})\ge 4\varphi(x_l).$$
If the points $x_0,x_1,...,x_N$ are distincts then we get for $l\in \{0,...,N\}$ :
$$\varphi(x_l)\ge 4^l\varphi(x_0)$$ and
$$ d(x_0,x_{l})\le \sum_{k=0}^{l-1}\frac{A}{\sqrt{\varphi(x_k)}}\le 2Ar$$
As $\varphi$ is continuous and $B(x_0, 2Ar)$ compact, the sequence must stabilize.
\endproof
\begin{rem} We only need the fact that $\varphi$ is bounded on the ball $B(x,2Ar)$
\end{rem}
\section{Some $\epsilon$-rigidity\& regularity results}
\subsection{$\epsilon$-quadratic decay}
\begin{thm}\label{theo:quadratic}Let $(M,g)$ be a complete Riemannian manifold whose metric is weakly $(\Lambda,1)$ regular ( where $\Lambda\ge 1$). Let  $\epsilon=\frac{1}{6\Lambda}$. If for some fixed point $o\in M$ we have :
$$\forall y\in M\,\,,\,\,\,\, |\Rm(y)|\le \frac{\epsilon^2}{d(o,y)^2}$$
then the metric $g$ is flat : $\Rm=0$.
\end{thm}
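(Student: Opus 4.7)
I argue by contradiction: suppose $\Rm$ is not identically zero. Then the auxiliary function $\sigma(y) := d(o,y)|\Rm(y)|^{1/2}$ is bounded above by $\epsilon$ on $M$ but is positive somewhere; set $S := \sup_{M}\sigma \in (0,\epsilon]$. The strategy is to combine the point selection lemma with the weak $(\Lambda,1)$-regularity estimate in a blow-up framework to arrive at an algebraic inequality inconsistent with the choice $\epsilon = 1/(6\Lambda)$.

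Pick $x_0 \in M$ so that $\sigma(x_0)$ is close to $S$, and set $r_0 := |\Rm(x_0)|^{-1/2}$ (so $d(o,x_0) \approx S r_0$). Apply the point selection lemma (Proposition~\ref{PSL}) to $\varphi = |\Rm|$ with a constant $A$ to be calibrated; this produces $\bar{x} \in B(x_0, 2A r_0)$ satisfying $|\Rm(\bar{x})| \ge 1/r_0^2$ and $|\Rm| \le 4/\bar{r}^2$ on $B(\bar{x}, A\bar{r})$, where $\bar{r} := |\Rm(\bar{x})|^{-1/2} \le r_0$. Rewriting $4/\bar{r}^2 = (\bar{r}/2)^{-2}$, the weak $(\Lambda,1)$-regularity estimate applies on $B(\bar{x}, \bar{r}/2)$ (for $A \ge 1/2$) and yields $|\nabla \Rm| \le 8\Lambda/\bar{r}^3$ on $B(\bar{x}, \bar{r}/4)$.

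Integrating this gradient estimate propagates the positive curvature bound: for every $c \in (0,1)$ and every $y \in B(\bar{x}, (1-c)\bar{r}/(8\Lambda))$, one obtains $|\Rm(y)| \ge c/\bar{r}^2$. Combined with $\sigma \le S$, this forces $d(o,y) \le S\bar{r}/\sqrt{c}$ throughout this smaller ball. The contradiction is then extracted by producing a point in the ball for which $d(o,\cdot)$ exceeds $S\bar{r}/\sqrt{c}$; this is done by following a geodesic from $\bar{x}$ in the outward radial direction (i.e.\ tangent to $\nabla d(o,\cdot)|_{\bar{x}}$ off the cut locus of $o$, with a standard approximation argument otherwise). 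Using the triangle inequality $\sigma(\bar{x}) \ge \sigma(x_0) - 2A$ to lower-bound $d(o,\bar{x})$, one obtains an inequality relating $S$, $\Lambda$, $A$, and $c$; the explicit constant $\epsilon = 1/(6\Lambda)$ emerges from optimizing the algebra.

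The principal obstacle is the tight calibration of constants: the parameter $A$ must be both large enough to invoke the weak regularity at the favorable scale $\bar{r}/2$ and small enough that the triangle-inequality loss $2A$ does not destroy the near-maximality of $\sigma$ at $\bar{x}$; the constant $c$ must then balance the shrinking ball radius against the strength of the lower bound $c/\bar{r}^2$. A minor technical step is the choice of an outward-pointing geodesic at $\bar{x}$, which is routine away from the cut locus of $o$ and handled by approximation otherwise.
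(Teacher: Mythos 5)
Your overall strategy --- propagate a pointwise curvature lower bound from a well-chosen point using the weak $(\Lambda,1)$ gradient estimate and then contradict the quadratic decay --- is the paper's, but the point-selection detour both breaks a step and costs the stated constant. The concrete gap is your lower bound on $d(o,\bar x)$: you invoke $\sigma(\bar x)\ge\sigma(x_0)-2A$, but $\sigma\le\epsilon\le 1/6$ everywhere while your regularity step forces $A\ge 1/2$, so $\sigma(x_0)-2A<0$ and the inequality carries no information (indeed $\bar x$ may lie arbitrarily close to $o$, since the selection radius $2Ar_0\ge r_0$ dwarfs $d(o,x_0)\le\epsilon r_0$). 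Consequently the ``outward radial geodesic'' step produces no point where $d(o,\cdot)$ exceeds $S\bar r/\sqrt c$. What is needed instead is the \emph{reverse} triangle inequality $d(o,y)\ge d(\bar x,y)-d(o,\bar x)$ combined with the \emph{upper} bound $d(o,\bar x)\le\epsilon\bar r$, which follows directly from the decay hypothesis applied at $\bar x$. Even with that repair your route cannot reach $\epsilon=1/(6\Lambda)$: the factor $4$ in the point selection lemma degrades the gradient bound to $8\Lambda/\bar r^3$, and the resulting requirement $(1-c)/(8\Lambda)>\epsilon(1+1/\sqrt c)$ is optimized at $c=1/4$, giving only $\epsilon<1/(32\Lambda)$. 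So the claim that ``$\epsilon=1/(6\Lambda)$ emerges from optimizing the algebra'' is not substantiated.

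The observation that makes the sharper constant accessible is that no point selection is needed at all: the hypothesis forces $|\Rm|\to 0$ at infinity, so if $\Rm\not\equiv 0$ then $|\Rm|$ attains its global maximum at some $x$; setting $|\Rm(x)|=1/r^2$ one has $\sup_{B(x,r)}|\Rm|\le 1/r^2$ for free, with no factor-of-$4$ loss. Weak regularity then gives $|\nabla\Rm|\le\Lambda/r^3$ on $B(x,r/2)$, hence $|\Rm|\ge 1/(2r^2)$ on $B(x,\delta r)$ with $\delta=1/(2\Lambda)=3\epsilon$; since $d(o,x)\le\epsilon r$, any $y\in\partial B(x,\delta r)$ has $d(o,y)\ge(\delta-\epsilon)r=2\epsilon r$, so the decay gives $|\Rm(y)|\le 1/(4r^2)$, contradicting the lower bound $1/(2r^2)$. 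Your auxiliary function $\sigma$ and the near-maximizer apparatus are superfluous for this statement.
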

\begin{proof} If the curvature does not vanish identically, then our hypothesis implies that we can find a 
point $x\in M$  where the curvature reached its maximum, in particular : 
$$|\Rm(x)|=\frac{1}{r^2}\and\,Ê\sup_{ B(x,r)}|\Rm|\le \frac{1}{r^2} $$
By $(\Lambda,1)$ regularity, we know that 
$$\sup_{ B(x,r/2)} |\nabla\Rm|\le\Lambda \frac{1}{r^3}$$
In particular, for $\delta=1/(2\Lambda)$, we have for $y\inÊB(x,\delta r)$ :
$$|\Rm(y)|\ge |\Rm(x)|-\delta r\Lambda \frac{1}{r^3}\ge \frac{1 }{2}|\Rm(x)|=\frac{1}{2r^2}\,\, .$$
We have supposed  $$|\Rm(x)|\le \frac{\epsilon^2}{d(o,x)^2},$$
hence $$d(o,x)\le \epsilon r\,\,,$$
and when $y\in \partial B(x,\delta r)$, we have 
$d(o,y)\ge d(y,x)-d(o,x)\ge \delta r-\epsilon r$  and
$$\frac{1}{2r^2}\le |\Rm|(y)\le \frac{\epsilon^2}{d(o,y)^2}\le \frac{\epsilon^2}{(\delta-\epsilon)^2 r^2},$$
Our choice of $\delta=3\epsilon$ implies that 
$$\frac{\epsilon^2}{(\delta-\epsilon)^2}=\frac{1}{4},$$
hence the result.
\end{proof}

\subsection{$L^{\frac{n}{2}}$ $\epsilon$-regularity}
\begin{thm} \label{n2reg} Let $(M,g)$ is  a complete Riemannian manifold whose metric is $(\Lambda,1)$ regular for some $\Lambda\ge 1$. There is  a constant $\epsilon(\Lambda,n)>0$ such that if for some $x\in M$ and $r>0$ we have
\begin{enumerate}[i)]
\item $\forall y\in B(x,\frac 34 r), \,\,Ê\forall s\in (0,r/4), \vol B(y,s)\ge vs^n$
\item  $\int_{B(x,r)} |\Rm|^{\frac n2}(y)dy\le \eps(\Lambda,n) v$\end{enumerate}
then
$$\sup_{B(x,\frac12 r)} |\Rm|\le \frac{16}{ r^2} \left(\frac{1}{v\,\,\epsilon(\Lambda,n)}\, \int_{B(x,r)} |\Rm|^{\frac n2}(y)dy\right)^{\frac 2n}.$$
\end{thm}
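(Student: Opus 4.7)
The strategy is proof by contradiction, using the point selection lemma (\pref{PSL}) to localize a ``bad'' point, the $(\Lambda,1)$-regularity to propagate a pointwise lower bound on $|\Rm|$ to an open ball, and the volume hypothesis (i) to convert this into a lower bound on $\int |\Rm|^{n/2}$ that contradicts (ii). Set $E := \int_{B(x,r)} |\Rm|^{n/2}\,d\vol$ and $\eta := (E/(v\epsilon))^{1/n}$, so that (ii) gives $\eta \le 1$. Assume for contradiction that some $y_0 \in B(x, r/2)$ satisfies $M := |\Rm(y_0)| > 16\eta^2/r^2$, equivalently $r\sqrt{M} > 4\eta$.

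Apply \pref{PSL} to $\varphi = |\Rm|$ at $y_0$ with $r_0 = 1/\sqrt{M}$ and the balanced parameter $A := \min\{1/4,\, r\sqrt{M}/8\}$. The inequality $A \le r\sqrt{M}/8$ yields $2Ar_0 \le r/4$, so the selected point $\bar x$ lies in $B(y_0, r/4) \subset B(x, 3r/4)$; the inequality $A < 1/2$ enables the $(\Lambda,1)$-regularity estimate; and the inequality $r\sqrt{M} > 4\eta$ (together with $\eta \le 1$) forces the crucial lower bound $A \ge \eta/4$. Point selection then yields $|\Rm(\bar x)| \ge M$ and $|\Rm| \le 4|\Rm(\bar x)| = 4/\bar r^2$ on $B(\bar x, A\bar r)$, where $\bar r := |\Rm(\bar x)|^{-1/2}$. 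Reading the bound as $\sup_{B(\bar x, A\bar r)}|\Rm| \le 1/(\bar r/2)^2$, the $(\Lambda,1)$-regularity estimate applied with radius $\bar r/2$ and parameter $2A$ gives
\[
\sup_{B(\bar x, A\bar r/2)} |\nabla \Rm| \;\le\; \frac{4\Lambda}{A\bar r^3},
\]
so linearizing from $|\Rm(\bar x)| = 1/\bar r^2$ produces $|\Rm| \ge 1/(2\bar r^2)$ on the smaller ball $B(\bar x, \rho)$ with $\rho := A\bar r/(8\Lambda)$. Since $A\bar r \le (r\sqrt{M}/8)\cdot(1/\sqrt{M}) = r/8$, we have $\rho \le r/(64\Lambda) \le r/4$, and combined with $\bar x \in B(x, 3r/4)$ this makes the volume hypothesis (i) applicable on $B(\bar x, \rho)$.

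Integrating, using $\vol B(\bar x, \rho) \ge v\rho^n$,
\[
E \;\ge\; \int_{B(\bar x,\rho)} |\Rm|^{n/2}\,d\vol \;\ge\; \frac{1}{(2\bar r^2)^{n/2}}\cdot v\rho^n \;=\; \frac{v}{2^{n/2}}\left(\frac{A}{8\Lambda}\right)^n \;\ge\; \frac{v\,\eta^n}{(32\Lambda)^n\,2^{n/2}}.
\]
Substituting $\eta^n = E/(v\epsilon)$ and dividing through by $E$ yields $\epsilon \ge 1/((32\Lambda)^n\,2^{n/2})$, contradicting any choice of $\epsilon(\Lambda,n)$ strictly smaller than this bound.

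I expect the subtlest step to be the choice of the parameter $A = \min\{1/4, r\sqrt{M}/8\}$: it must be small enough both to place $\bar x$ inside $B(x, 3r/4)$ after point selection and to stay below $1/2$ so that the regularity estimate applies, yet of order at least $\eta$ so that the integrated $|\Rm|^{n/2}$-mass on $B(\bar x, \rho)$ scales like $v\eta^n$ rather than merely $v$. This correct scaling is exactly what allows the cancellation against $\eta^n = E/(v\epsilon)$ in the last step and recovers the quadratic $\eta^2$-dependence built into the conclusion.
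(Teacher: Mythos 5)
Your proof is correct and follows essentially the same route as the paper's: the point selection lemma with a parameter $A$ comparable to the rescaled curvature scale, the $(\Lambda,1)$-regularity to get a gradient bound and hence a lower bound $|\Rm|\ge \tfrac{1}{2\bar r^2}$ on a ball of radius $\sim A\bar r/\Lambda$, and then hypothesis (i) to produce a lower bound on $\int|\Rm|^{n/2}$ contradicting (ii). The only differences are cosmetic (your $\eta$ plays the role of the paper's $\mu/4$, and your cap $A=\min\{1/4, r\sqrt M/8\}$ replaces the paper's preliminary step showing $\mu\le 4$), yielding a slightly different but equally valid constant $\epsilon(\Lambda,n)$.
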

\begin{proof} Assume that there is a point $z\in B(x,\frac12 r)$ such that 
$$|\Rm|(z)\ge\frac{\mu^2}{r^2}$$ where
$\mu\in (0, 4]$. By the point selection lemma (with $A=\mu/8$), we find a point $y\in B(z,\frac14 r)\subset B(x,\frac34 r)$ such that
$$|\Rm(y)|=\frac{1}{\rho^2}\ge \frac{\mu^2}{r^2}$$
and
$$\sup_{B\left(y,2A(\frac{\rho}{2})\right)}|\Rm|\le \frac{4}{\rho^2}.$$
By  $(\Lambda,1)$ regularity, we get 
$$\sup_{B(y,A\rho/2)}|\nabla\Rm|\le \Lambda \frac{8}{2A\rho^3}= \frac{4\,\Lambda}{A\rho^3}.$$
As in the proof of of the theorem \ref{theo:quadratic}, if we let $$\delta= \frac{A}{8\Lambda}=\frac{\mu}{64\Lambda}$$
then  on the ball $B(y, \delta\rho)\,\,:$
$$|\Rm|\ge \frac{1}{2\rho^2}.$$
Hence we get
\begin{equation*}\begin{split}
\int_{B(x,r)} |\Rm|^{\frac n2}(\sigma)d\sigma&\ge \int_{B(y,\delta\rho)} |\Rm|^{\frac n2}(\sigma)d\sigma\\
&\ge  \frac{\vol B(y, \delta\rho)}{2^{\frac{n}{2}}\rho^n}\\
&\ge v \left(\frac{\delta}{\sqrt{2}}\right)^n\,\,.
\end{split}\end{equation*}
For
$$\epsilon(\Lambda,n)= \left(\frac{1}{16\Lambda\sqrt{2}}\right)^n,$$
we get that when  $\int_{B(x,r)} |\Rm|^{\frac n2}(y)dy\le \eps(\Lambda,n) v$, we can not find
a point $z\in B(x,\frac12 r)$ such that 
$$|\Rm|(z)\ge\frac{16}{r^2}.$$   
Moreover when $z\in B(x,\frac12 r)$ then for 
$\mu^2=r^2|\Rm|(z)$ we get :
$$v \epsilon(\Lambda,n) \left(\frac{\mu}{4}\right)^n=v \left(\frac{\mu}{64\sqrt{2} \Lambda}\right)^n\le\int_{B(x,r)} |\Rm|^{\frac n2}(\sigma)d\sigma.$$
 \end{proof}

\begin{rems}
\begin{enumerate}[i)]
\item For Einstein manifold, this results is due to M. Anderson (\cite{And_JAMS}) : assume that $$\ricci_g=(n-1)\tau g$$ and note by 
$V_\tau(r)$ the volume of a geodesic ball of radius $r$ in the simply connected complete Riemannian $n$-manifold with constant sectional curvature $\tau$, then  the Bishop-Gromov inequality implies that for 
$y\in B(x,\frac 34 r)$ and  $s\in (0,r/4)$ we have :
$$ \vol B(y,s)\ge \frac{V_\tau(s)}{V_\tau(2r)}\, \vol B(y,2r)\ge\frac{V_\tau(s)}{V_\tau(2r)}\, \vol B(x,r)$$
Hence when $|\tau|r^2\le 1$, our proof of  theorem (\ref{n2reg})  shows that  the above hypothesis $i)$ and $ii)$ can be gathered in a single one :
$$\frac{V_\tau(r)}{\vol B(x,r)}\int_{ B(x,r)} |\Rm|^{\frac n2}(\sigma)d\sigma\le \epsilon(n).$$
\item For critical metric and in dimension 4, this result has been also proven G.Tian and J.Viaclovsky (\cite[theorem1.2]{TV_cmh}). In fact, this result was a refinement of a earlier result in (\cite[theorem 3.1]{TV_inven}) where the hypothesis $i)$ was replaced by a Sobolev inequality :
$$\forall \varphi\in C^\infty_0(B(x,r)),\,\,\, \|\varphi\|_{L^{\frac{2n}{n-2}}}\le A\|d\varphi\|_{L^2}.$$

And according to (\cite{Aku} or \cite{Carron_smf}), such a Sobolev inequality implies a lower bound on the volume on geodesic ball :
if $B\subset B(x,r)$ is a geodesic ball of radius $r(B)$ then 
$$\vol B\le C(n)\left(\frac{r(B)}{A}\right)^n.$$
It should also be noticed that the main argument in the proof of the result of G.Tian and J.Viaclovsky was also a point selection lemma that relies a priori to the 
$\epsilon$ regularity result on \cite{TV_inven}, that is the proof relies on a intricate deGeorgi-Moser-Nash iteration scheme argument.
The results of G.Tian and J.Viaclovsky has been extended by X-X. Chen and B.Weber (\ref{ChenWeber}) in two directions :
 for extremal K\"ahler metric and in dimension $n>4$. Now from the proof of  (\cite[proposition 3.1]{TV_cmh}), it is clear that the $\epsilon$-regularity result of 
X-X. Chen and B.Weber (see \cite[theorem 4.6]{ChenWeber}) implies the above $\epsilon$ regularity result. But our proof is shorter and doesn't rely on
deGeorgi-Moser-Nash iteration scheme argument but on quite classical elliptic estimate. 
\item
Eventually, it should be noticed that it is clear that we get estimate on the covariant derivative of the Riemann tensor $\nabla^j\Rm$ , $j=1,\hdots\,, k$,  if we assume that the metric is $(\Lambda,k)$ regular.
\end{enumerate}
\end{rems}
This result also implies some $\epsilon$-$L^{\frac{n}{2}}$ rigidity result :
\begin{cor}Let $(M,g)$ is  a complete Riemannian manifold whose metric is $(\Lambda,1)$ regular for some $\Lambda\ge 1$. Assume that :
\begin{enumerate}[i)]
\item $\forall x\in M$ and $\forall r>0\,,\, \vol B(x,r)\ge vr^n$
\item  $\int_{M} |\Rm|^{\frac n2}(y)dy\le \eps(\Lambda,n) v$\end{enumerate}
Then
$$\Rm=0.$$
\end{cor}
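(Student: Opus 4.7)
The plan is to reduce the corollary directly to the preceding $\epsilon$-regularity result (Theorem \ref{n2reg}) by letting the scale $r$ tend to infinity. No new analytic input should be needed: the corollary's global volume bound supplies the local noncollapsing hypothesis at every scale, and the global $L^{n/2}$ bound on the curvature trivially dominates the integral over any geodesic ball.

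More precisely, I would fix an arbitrary point $x\in M$ and, for each $r>0$, verify the two hypotheses of Theorem \ref{n2reg} on the ball $B(x,r)$. Hypothesis $i)$ is immediate: for every $y\in B(x,\tfrac{3}{4}r)$ and every $s\in(0,r/4)$ we have $\vol B(y,s)\ge v s^n$ by assumption. Hypothesis $ii)$ holds because
\[
\int_{B(x,r)}|\Rm|^{n/2}\,d\vol_g \;\le\; \int_M |\Rm|^{n/2}\,d\vol_g \;\le\; \epsilon(\Lambda,n)\, v.
\]
Theorem \ref{n2reg} therefore applies and gives
\[
\sup_{B(x,r/2)}|\Rm| \;\le\; \frac{16}{r^{2}}\left(\frac{1}{v\,\epsilon(\Lambda,n)}\int_{B(x,r)}|\Rm|^{n/2}d\vol_g\right)^{2/n} \;\le\; \frac{16}{r^{2}}.
\]

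Since $x$ is fixed and the estimate holds for all $r>0$, letting $r\to\infty$ forces $|\Rm(x)|=0$. As $x\in M$ was arbitrary, $\Rm\equiv 0$, which is the desired conclusion. There is really no obstacle here: the only thing to check is that the scale $r$ in Theorem \ref{n2reg} can be taken as large as we wish, which is guaranteed by completeness together with the fact that both hypotheses of that theorem are monotone/global and inherited from the corresponding global assumptions in the corollary.
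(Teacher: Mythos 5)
Your proposal is correct and is exactly the intended argument: the paper states the corollary as an immediate consequence of Theorem~\ref{n2reg} without writing out a proof, and the reduction you give (apply the theorem at every point and every scale, then let $r\to\infty$) is the obvious and correct way to carry it out. No gaps.
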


\subsection{$\epsilon$-$L^{p}$ regularity}The above argument can be extended to other $L^p$ estimates on  the curvature :
\begin{thm} \label{preg}Let  $(M,g)$ be   a complete Riemannian manifold whose metric is $(\Lambda,1)$ regular for some $\Lambda\ge 1$. Let $p>0$. 
For any $x\in M$ and $r>0$ we let \footnote{where $B$ runs over all the geodesic ball of radius $r(B)$ included in $B(x,r)$.}:
$$\mcM(x,r):=\sup_{B\subset B(x,r)}\left( \frac{r(B)^{2p}}{\vol B}\int_B |\Rm|^p\right)^{\frac{1}{p}}.$$

There is  a constant $\epsilon(\Lambda,p)>0$ such that if for some $x\in M$ and $r>0$ we have
 $$ \mcM(x)\le \epsilon(\Lambda,p)$$ 
 then
 $$\sup_{B(x,\frac12 r)} |\Rm|\le \frac{16}{\epsilon(\Lambda,p) r^2} \mcM(x,r)$$
\end{thm}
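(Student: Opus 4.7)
The plan is to run almost verbatim the argument of Theorem~\ref{n2reg}, the conceptual simplification being that the scale invariant quantity $\mathcal{M}(x,r)$ already contains the volume normalization in its denominator, so that no separate non-collapsing hypothesis on $(M,g)$ is needed.

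Fix $z\in B(x,r/2)$ and set $\mu^2:=r^2|\Rm(z)|$. Put $\mu':=\min(\mu,4)$ and apply the point selection lemma (Proposition~\ref{PSL}) to $|\Rm|$ starting from $z$, with effective radius $r/\mu'$ and parameter $A=\mu'/8\in(0,1/2]$. This produces a point $y\in B(z,r/4)\subset B(x,3r/4)$ and a scale $\rho\le r/\mu'$ with $|\Rm(y)|=1/\rho^2$ and $\sup_{B(y,A\rho)}|\Rm|\le 4/\rho^2$. The $(\Lambda,1)$ regularity estimate then gives $\sup_{B(y,A\rho/2)}|\nabla\Rm|\le 4\Lambda/(A\rho^3)$, and the same linear-interpolation argument used in the proofs of Theorems~\ref{theo:quadratic} and~\ref{n2reg} forces $|\Rm|\ge 1/(2\rho^2)$ on the smaller ball $B(y,\delta\rho)$, where $\delta:=A/(8\Lambda)=\mu'/(64\Lambda)$.

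Because $d(x,y)\le 3r/4$ and $\delta\rho\le r/(64\Lambda)\le r/64$, the ball $B(y,\delta\rho)$ is contained in $B(x,r)$ and is therefore admissible in the supremum defining $\mathcal{M}(x,r)$. Plugging this particular ball into the definition of $\mathcal{M}$, together with the curvature lower bound just obtained, yields
$$\mathcal{M}(x,r)^p \;\ge\; \frac{(\delta\rho)^{2p}}{\vol B(y,\delta\rho)}\int_{B(y,\delta\rho)}|\Rm|^p \;\ge\; \frac{(\delta\rho)^{2p}}{(2\rho^2)^p}=\left(\frac{\delta^2}{2}\right)^p,$$
hence $\mathcal{M}(x,r)\ge (\mu')^2/(2\cdot 64^2\Lambda^2)$.

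The argument closes by choosing $\epsilon(\Lambda,p):=1/(512\Lambda^2)$. If $\mu\ge 4$, then $\mu'=4$ and the previous inequality reads $\mathcal{M}(x,r)\ge \epsilon$, contradicting the hypothesis; hence $\mu=\mu'<4$, and rearranging the same inequality gives $|\Rm(z)|\le (16/\epsilon)\,\mathcal{M}(x,r)/r^2$, which is the desired bound once one passes to the supremum over $z\in B(x,r/2)$. I expect no serious obstacle beyond bookkeeping of the universal constants $64$, $512,\ldots$ and the mild self-consistency step that rules out $\mu\ge 4$; the analytic content is entirely inherited from Proposition~\ref{PSL} and the $(\Lambda,1)$ regularity, and it is worth noting that the constant $\epsilon$ actually turns out to be independent of $p$, no Bishop--Gromov type volume control entering at any stage.
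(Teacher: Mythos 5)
Your proof is correct and is exactly the adaptation of the proof of Theorem~\ref{n2reg} that the paper intends (the paper states Theorem~\ref{preg} without proof, merely asserting that "the above argument can be extended"): the point selection at scale $r/\mu'$, the $(\Lambda,1)$-regularity gradient bound, the lower bound $|\Rm|\ge 1/(2\rho^2)$ on $B(y,\delta\rho)$ with $\delta=\mu'/(64\Lambda)$, and the key observation that the volume of $B(y,\delta\rho)$ cancels in $\mcM$, which is precisely why the non-collapsing hypothesis of Theorem~\ref{n2reg} becomes unnecessary. The only (cosmetic) caveat is the boundary case $\mcM(x,r)=\epsilon$ with $\mu>4$, which your non-strict inequalities do not quite exclude; this is resolved by shrinking $\epsilon$ by a factor of $2$, and the paper's own proof of Theorem~\ref{n2reg} has the identical looseness.
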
 And we also get the following $\epsilon$-$L^{p}$ rigidity result :
\begin{cor}Let $(M,g)$ is  a complete Riemannian manifold whose metric is $(\Lambda,1)$ regular for some $\Lambda\ge 1$. Assume that :
$\forall x\in M$ and $\forall r>0$ :
  $$ \frac{r^{2p}}{\vol B(x,r)}\int_{B(x,r)} |\Rm|^{p}(y)dy\le \eps(\Lambda,p)^p $$
Then
$$\Rm=0.$$
\end{cor}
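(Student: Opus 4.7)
The argument follows the blueprint of \tref{n2reg}, exploiting the fact that $\mcM(x,r)$ is already volume-normalized and scale-invariant. The main simplification is that the pointwise volume lower bound (hypothesis $i)$ of \tref{n2reg}) is no longer needed: the factor $\vol B$ in the denominator of $\mcM$ will cancel the volume factor that arises when one integrates $|\Rm|^p$ over a small ball on which $|\Rm|$ is comparable to a constant.

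Suppose some $z\in B(x,r/2)$ satisfies $|\Rm|(z)=\mu^2/r^2$ with $\mu\in(0,4]$. I would apply the point selection lemma \pref{PSL} with $\varphi=|\Rm|$, starting point $z$, base radius $r/\mu$ and amplification $A=\mu/8$, producing a point $y\in B(z,r/4)\subset B(x,3r/4)$ with $|\Rm(y)|=1/\rho^2\ge \mu^2/r^2$ and $\sup_{B(y,A\rho)}|\Rm|\le 4/\rho^2$. The $(\Lambda,1)$ regularity hypothesis then yields $\sup_{B(y,A\rho/2)}|\nabla\Rm|\le 4\Lambda/(A\rho^3)$, and integrating along geodesics issuing from $y$ gives $|\Rm|\ge 1/(2\rho^2)$ throughout the smaller ball $B(y,\delta\rho)$ with $\delta=A/(8\Lambda)=\mu/(64\Lambda)$, exactly as in the proof of \tref{theo:quadratic}.

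At this step the proof departs from \tref{n2reg}. Since $\delta\rho\le r/(64\Lambda)\le r/64$ and $y\in B(x,3r/4)$, the ball $B(y,\delta\rho)$ lies inside $B(x,r)$ and so is admissible in the supremum defining $\mcM(x,r)$. Applying the definition to this single ball gives
\begin{equation*}
\mcM(x,r)^p\ge\frac{(\delta\rho)^{2p}}{\vol B(y,\delta\rho)}\int_{B(y,\delta\rho)}|\Rm|^p\ge\frac{(\delta\rho)^{2p}}{(2\rho^2)^p}=\left(\frac{\delta^2}{2}\right)^p,
\end{equation*}
the volume cancelling thanks to the normalization. Taking $p$-th roots yields $\mu^2=(64\Lambda)^2\delta^2\le 2(64\Lambda)^2\mcM(x,r)$, i.e.\ $|\Rm|(z)\le (8192\Lambda^2/r^2)\mcM(x,r)$, which is the stated bound with the explicit choice $\epsilon(\Lambda,p)=1/(512\Lambda^2)$. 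The constraint $\mu\le 4$ needed in the application of the point selection lemma is self-consistent under the smallness hypothesis $\mcM(x,r)\le\epsilon(\Lambda,p)$, since any violator would already satisfy $\mu^2\le 2(64\Lambda)^2\mcM(x,r)\le 16$.

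The main obstacle is simply the bookkeeping of constants: there is no new analytic idea beyond the combination of \pref{PSL} with $(\Lambda,1)$ regularity that already appeared in \tref{theo:quadratic} and \tref{n2reg}, the volume normalization built into $\mcM$ playing the role that was previously played by the volume lower bound hypothesis.
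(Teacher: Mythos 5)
Your argument is correct and is essentially the proof the paper intends: for \tref{preg} and this corollary the paper only remarks that ``the above argument can be extended,'' and what you write is exactly that extension of the proof of \tref{n2reg} --- point selection with $A=\mu/8$, the $(\Lambda,1)$ regularity gradient bound forcing $|\Rm|\ge 1/(2\rho^2)$ on $B(y,\delta\rho)$ with $\delta=\mu/(64\Lambda)$, and then the observation that the normalization $\vol B$ in the definition of $\mcM(x,r)$ cancels $\vol B(y,\delta\rho)$, which is precisely why the volume lower bound (hypothesis $i)$ of \tref{n2reg}) can be dropped. The only step you leave unstated is the final deduction of the corollary from your estimate $|\Rm|(z)\le 8192\,\Lambda^2\,\mcM(x,r)/r^2$: since the hypothesis holds for every center and every radius, $\mcM(x,r)\le\eps(\Lambda,p)$ for all $r$, and letting $r\to\infty$ gives $\Rm=0$.
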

It is also clear that these results together with \cite[theorem 4.1]{TV_inven} gives some conditions that implies finiteness of the number of ends and that each end is ALE of order 0, but we prefer to refrain from stating it.

\section{Almost maximal volume growth}
With the point selection lemma, we are going to give an alternative proof of the following (slightly improved) result of Anderson \cite{And_inv}:

\begin{thm} \label{volcur}There are constant $\epsilon(n)>0$ and $C(n)$ such that if
$(M^n,g)$ is a complete Ricci flat manifold and $x\in M$ and $r>0$ are such that 
\footnote{$\omega_n$ is the volume of the unit Euclidean ball.}
$$\vol B(x,r)\ge \omega_n(1-\epsilon_n)r^n$$ then
$$\sup_{B(x,r/2)}|Rm|\le \frac{C(n)}{r^{2}}\sup_{y\in B(x,\frac34 r)}\left( \frac{\omega_n r^n-\vol B(y,r)}{r^n}\right)^{\frac14}.$$
\end{thm}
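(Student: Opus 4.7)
We argue by contradiction, in the same spirit as the proofs of Theorems~\ref{theo:quadratic}, \ref{n2reg} and \ref{preg}. By the scaling invariance of the inequality we may take $r=1$, and set
$\Phi := \sup_{y\in B(x,3/4)}(\omega_n-\vol B(y,1))$
for the largest (normalised) volume defect in $B(x,3/4)$. Suppose that $|\Rm(z)|=\mu^2$ at some $z\in B(x,1/2)$; the goal is to produce a quantitative lower bound on $\Phi$ in terms of $\mu$, incompatible with $\mu^2>C(n)\Phi^{1/4}$.

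Apply the Point Selection Lemma (Proposition~\ref{PSL}) to $\varphi=|\Rm|$ starting at $z$, with parameter $A$ proportional to $\mu$, exactly as in the proof of Theorem~\ref{n2reg}. This produces a point $y\in B(x,3/4)$ with $|\Rm(y)|=1/\rho^2\ge \mu^2$ and a concentric ball $B(y,A\rho)$ on which $|\Rm|\le 4/\rho^2$. Because a Ricci-flat metric is Einstein and hence critical, Proposition~\ref{prop:regularcritical} provides $(\Lambda,k)$ regularity estimates for every $k$, with constants depending only on $n$ and $k$. Copying the computation at the end of the proof of Theorem~\ref{theo:quadratic} this yields $|\Rm|\ge 1/(2\rho^2)$ on $B(y,\delta\rho)$ for some universal $\delta>0$, together with uniform bounds on the higher covariant derivatives of $\Rm$ on that ball.

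Now exploit Ricci-flatness via the Gray-type volume expansion at $y$: since $\ricci=0$ kills the $s^2$ coefficient,
\[
\vol B(y,s)\;=\;\omega_n s^n\Bigl[1-\alpha_n|\Rm(y)|^2 s^4+O(s^6)\Bigr],\qquad \alpha_n>0,
\]
where the $O(s^6)$ error is uniformly controlled by the regularity bounds on $\Rm,\nabla\Rm,\ldots$ supplied by the previous step. Evaluating at $s$ equal to a small universal fraction of $\rho$ produces the local defect estimate $\omega_n s^n-\vol B(y,s)\ge c_n\,\omega_n s^{n+4}/\rho^4$. Because Ricci$\,=0\ge 0$, the Bishop--Gromov monotonicity formula implies that $t\mapsto (\omega_n t^n-\vol B(y,t))/(\omega_n t^n)$ is nondecreasing, so this local defect at scale $s$ propagates to the macroscopic scale $t=1$, producing the required lower bound on $\Phi$.

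The main obstacle is carrying out the scale-matching carefully enough to extract the sharp $\Phi^{1/4}$ dependence rather than just the rigidity statement ``$\Phi$ small $\Rightarrow \Rm\equiv 0$''. The exponent $1/4$ is traceable to the quartic leading term $|\Rm|^2 s^4$ in the Ricci-flat volume expansion combined with the $\mu$-dependence of the point-selection radius $A\rho$. A cleaner alternative would be to reduce directly to Theorem~\ref{n2reg} by first establishing, under almost-maximal volume growth and Ricci-flatness, an estimate of the form
$\int_{B(x,1)}|\Rm|^{n/2}\lesssim \Phi^{n/8}\,\vol B(x,1)$,
and then invoking the $L^{n/2}$ $\epsilon$-regularity to obtain the stated pointwise bound on $|\Rm|$ with the $\Phi^{1/4}$ rate.
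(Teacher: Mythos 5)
Your outline follows the paper's proof route exactly: point selection applied to $|\Rm|$ with $A\sim\mu$, the regularity estimates for Ricci-flat (hence critical) metrics, the Gray--Vanhecke expansion of the volume of small geodesic balls, and Bishop--Gromov monotonicity to propagate the infinitesimal volume defect at the selected point $y$ out to scale $r$. However, the one step you defer --- ``carrying out the scale-matching carefully enough'' --- is precisely the step that carries the quantitative content of the theorem, and the scale you propose, $s$ equal to a \emph{universal} fraction of $\rho$, is the wrong one and makes the argument break down. The derivative bounds produced by point selection live only on the ball $B(y,\mu\rho/16)$ (the radius supplied by the lemma is $A\rho\sim\mu\rho$, not $\rho$), and there they have the form $|\nabla^j\Rm|\le C(n)(\mu\rho)^{-j}\rho^{-2}$. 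Consequently the error in the volume expansion is only bounded by $D_n s^6|\Rm(y)|(\mu\rho)^{-4}$, and for this to be dominated by the main term $\alpha_n|\Rm(y)|^2s^4$ one needs $s^2\lesssim \mu^4\rho^2$, i.e.\ $s\lesssim\mu^2\rho$. At a universal fraction of $\rho$ the expansion is not under control (for small $\mu$ such a ball is not even contained in the region where $\nabla^j\Rm$ is bounded), so your claimed defect estimate $\omega_n s^n-\vol B(y,s)\ge c_n\omega_n s^{n+4}/\rho^4$ does not follow; and even if it did, it would give a relative defect bounded below by a universal constant, which only yields the rigidity corollary, not the stated bound.

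The paper resolves this by taking $s=\eta_n\mu^2\rho$, with $\eta_n$ fixed by equating the error bound to half the main term; the resulting relative defect is $\sim\eta_n^4\mu^8$, and since $\mu^2=r^2|\Rm(z)|$ this is exactly what produces $|\Rm(z)|\le C(n)r^{-2}\Phi^{1/4}$ --- the exponent $1/4$ comes from comparing $\mu^8$ with $\mu^2$, not merely from the quartic term in the expansion. Your suggested alternative, first proving $\int_{B(x,r)}|\Rm|^{n/2}\lesssim\Phi^{n/8}\vol B(x,r)$ and then invoking Theorem~\ref{n2reg}, is essentially circular: the only available mechanism for converting a volume defect into curvature information is the same pointwise Gray--Vanhecke expansion, so that integral estimate is no easier than the pointwise one you are after. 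A further small point: since the injectivity radius at $y$ is not controlled, the expansion must be applied to the pulled-back metric $\bar g=\exp_y^*g$ on a Euclidean ball in $T_yM$, using that $\exp_y$ is an immersion so that $\vol B(y,s)\le\vol_{\bar g}\bB(s)$; writing the expansion directly for $\vol B(y,s)$ glosses over this.
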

This theorem has the following corollary 

\begin{cor}\label{rigvol}If $(M^n,g)$ is a complete Ricci flat manifold
such that 
$$\lim_{r\to\infty} \frac{\vol B(x,r)}{r^n}\ge \omega_n(1-\epsilon_n)$$
then $(M^n,g)$  is isometric to the Euclidean space $\R^n$.
\end{cor}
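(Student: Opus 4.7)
The plan is to apply \tref{volcur} at larger and larger scales around a fixed point and let the scale tend to infinity, exploiting that the prefactor $C(n)/r^{2}$ beats any bounded volume defect.

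First I would invoke the Bishop--Gromov inequality for $\ricci_g \ge 0$: for every base point $y$, the function $r \mapsto \vol B(y,r)/(\omega_n r^n)$ is nonincreasing, so the limit
$$\nu := \lim_{r\to\infty} \frac{\vol B(y,r)}{r^n}$$
exists. A standard triangle-inequality argument (via $B(x, r - d(x,y)) \subset B(y,r)$ combined with monotonicity) shows $\nu$ does not depend on $y$. The hypothesis of the corollary translates into $\nu \ge \omega_n(1-\epsilon_n)$, and monotonicity then yields the pointwise bound
$$\vol B(y,r) \ge \nu\, r^n \ge \omega_n(1-\epsilon_n)\, r^n$$
for every $y \in M$ and every $r > 0$. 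In particular, the volume hypothesis of \tref{volcur} is verified uniformly, at every point and at every scale.

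Next, I apply \tref{volcur} at an arbitrary reference point $x \in M$ and any large radius $r$:
$$\sup_{B(x,r/2)} |\Rm| \le \frac{C(n)}{r^{2}} \sup_{y \in B(x, \tfrac{3}{4}r)} \left( \frac{\omega_n r^n - \vol B(y,r)}{r^n} \right)^{1/4}.$$
Using the Bishop--Gromov lower bound $\vol B(y,r)/r^n \ge \nu$ a second time, the quantity inside the supremum is bounded by the constant $\omega_n - \nu$, independent of $r$. Hence
$$\sup_{B(x,r/2)} |\Rm| \le \frac{C(n)\,(\omega_n - \nu)^{1/4}}{r^{2}}.$$
Fixing any $p \in M$, taking $x = p$ and letting $r \to \infty$ forces $|\Rm|(p) = 0$. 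Since $p$ was arbitrary, $(M,g)$ is flat.

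It remains to upgrade flatness to an isometry with $\R^n$. A complete flat Riemannian $n$-manifold is of the form $\R^n/\Gamma$ for some torsion-free discrete group $\Gamma$ of Euclidean isometries. By the Bieberbach theorem, $\Gamma$ contains a finite-index subgroup of translations of rank $k$, so $\vol B(x,r) \le C r^{n-k}$ asymptotically; since $\nu > 0$ we must have $k = 0$, so $\Gamma$ is finite. A finite group acting freely by isometries on $\R^n$ must be trivial, because any $\gamma \in \Gamma$ fixes the centroid of any $\gamma$-orbit. Therefore $M \cong \R^n$.

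The main obstacle is step three: showing that the volume defect appearing in \tref{volcur} stays bounded uniformly in $r$ while the geometric prefactor $1/r^{2}$ decays. The nontrivial input is the uniform Bishop--Gromov bound $\vol B(y,r) \ge \nu r^n$ at \emph{every} point $y$, which relies on the base point independence of the asymptotic volume ratio; the remaining pieces (monotonicity, Bieberbach) are standard.
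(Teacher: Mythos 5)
Your proposal is correct and follows exactly the route the paper intends: the paper states that Corollary~\ref{rigvol} is ``straightforward'' from Theorem~\ref{volcur}, and your argument supplies the standard details (Bishop--Gromov monotonicity and basepoint independence of the asymptotic volume ratio to get the uniform bound $\vol B(y,r)\ge \nu r^n$, letting $r\to\infty$ in the curvature estimate, and Bieberbach to pass from flatness to $\R^n$). Nothing further is needed.
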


Anderson has shown first the corollary \ref{rigvol} with an argument by contradiction and then he deduced (also by contradiction) 
an estimate for the $\mcC^{1,\alpha}$-harmonic radius when the volume of the geodesic ball is almost maximal under a uniform bound on the Ricci curvature. When the manifold is Einstein, the elliptic regularity of the Einstein equation implies a bound
on the curvature. For Einstein metric, our curvature estimate is more precise  . Here we are going  to show the theorem \ref{volcur}, the corollary \ref{rigvol} is then straightforward.

\begin{proof}
Again assume that there is a point $z\in B(x,r/2)$ such that such that 
$$|\Rm|(z)\ge \frac{\mu^2}{r^2}$$ where
$\mu\in (0, 4]$. By the point selection lemma (with $A=\mu/8$) we find a point $y\in B(z,\frac14 r)\subset B(x,\frac34 r)$ such that
$$|\Rm(y)|=\frac{1}{\rho^2}\ge \frac{\mu^2}{r^2}$$
and
$$\sup_{B\left(y,2A(\frac{\rho}{2})\right)}|\Rm|\le \frac{4}{\rho^2}.$$
By  $(\Lambda,7)$ regularity, we get  for $j=1,\,\hdots\, ,7.$ :
\begin{equation}
\label{boundnabla}\sup_{B(y,\mu\rho/16)}|\nabla^j\Rm|\le  \frac{C(n)}{(\mu\rho)^j\rho^2}.\end{equation}
According to A. Gray and L. Vanhecke, we know the asymptotic expansion of 
the volume of geodesic balls \cite{GrayI},\cite[Theorem 3.3]{GrayVan} :
$$\vol B(y,r)=\omega_nr^n\left(1-\frac{1}{120(n+2)(n+4)} |Rm(y)|^2r^4+O(r^6)\right)$$
We are going to estimate the "$O(r^6)$" . The first step is to remark that if $\bB(s)$ is the Euclidean ball of radius
$s$ in $(T_yM,g_{y})$ then
$$\exp_{y}\,:\, \bB(A\rho)\rightarrow B(y,A\rho)$$ is an immersion, hence for $\bar g=\exp_{y}^*g$,
we get for all $r\le A \rho$: 
$$\vol B(y,r)\le\vol_{\bar g} \bB(\rho).$$
The estimation (\ref{boundnabla}) and the Jacobi equation implies that if
$t\mapsto J(t)$ is a Jacobi field along the geodesic 
$t\mapsto \exp_{y}(tv)$
with $|v|=1$, $J(0)=0$ and 
$|J'(0)|=1$ then for all
$t\in [0,\mu\rho/16]$ and $l\in \{0,...,7\}$\,\,:
$$\left|\frac{d^l}{dt^l} J(t)\right| \le B_n|Rm(y)|(\mu\rho) ^{3-l}$$
Then Gray\&Vanhecke's computation leads to 
$$\forall s\in (0,\mu\rho/16)\,\,,\,\, \vol_{\bar g} \bB(s)=\omega_ns^n\left(1-\frac{1}{120(n+2)(n+4)} |Rm(y)|^2s^4+\delta(s)\right)$$
where for some constant $D_n>1$ depending only on the dimension $n$ :
$$|\delta(s)|\le D_n s^6|Rm(y)|(\mu\rho)^{-4}$$
We choose
$s=\eta_n \mu^2 \rho$ such that
$$D_n s^6|Rm(y)| (\mu\rho)^{-4}=\frac{1}{240(n+2)(n+4)} |Rm(y)|^2s^4$$
i.e.
$$\eta_n^2=\frac{1}{240(n+2)(n+4)D_n}$$
Then we get for $\sigma=\eta_n \mu^2 \rho$
$$ \frac{\vol B(y,r)}{r^n}\le \frac{\vol B(y,\sigma)}{\sigma^n}\le \frac{\vol_{\bar g} \bB(\sigma)}{\sigma^n}\le \omega_n\left(1-\frac{\eta_n^4\mu^8}{240(n+2)(n+4)}\right).$$
\end{proof}
\subsection{A sphere theorem}
With the same idea, we can give a direct proof of the following result
\begin{thm}\label{vol_rig}
There is a $\eps_n>0$ such that if $(M^n,g)$ is closed Einstein manifold with positive scalar curvature:
$$\ricci_g=(n-1)g$$
 and 
 $$\frac{\vol(M,g)}{\vol \bS^n}\ge 1-\eps_n$$
 then $(M,g)$ is isometric to the round sphere $\bS^n$.
\end{thm}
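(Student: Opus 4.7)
The plan is to mirror the proof of Theorem \ref{volcur}, with $\bS^n$ replacing Euclidean space as the model and the Weyl tensor $W$ replacing the full Riemann tensor. For any Einstein manifold with $\ricci = (n-1)g$, decompose $\Rm = W + U$, where $U$ is the parallel tensor associated to constant sectional curvature one; then $|\Rm|^2 = |W|^2 + 2n(n-1)$ and $\nabla^j\Rm = \nabla^j W$ for $j \ge 1$. The conclusion $(M,g) \cong \bS^n$ is equivalent to $W \equiv 0$: the latter forces constant sectional curvature one, and the volume hypothesis then excludes nontrivial quotients. Myers gives $\diam(M) \le \pi$, and Bishop--Gromov combined with the volume hypothesis promotes this to the local bound $\vol B(x,r) \ge (1-\eps_n) V(r)$ for every $x \in M$ and $r \in (0,\pi]$, where $V(r)$ denotes the volume of a geodesic ball of radius $r$ in $\bS^n$. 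Einstein metrics being critical, they are $(\Lambda,k)$-regular for every $k$ by Proposition \ref{prop:regularcritical}.

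Argue by contradiction: assume $W \not\equiv 0$ and let $y \in M$ realise $K := \max_M |W| > 0$. Apply the point-selection lemma (Proposition \ref{PSL}) to $\varphi = |W|$ with $r = 1/\sqrt K$ and a free parameter $A > 0$; compactness of $M$ poses no topological obstruction. One obtains $\bar y$ with $|W(\bar y)| = 1/\rho^2 \ge K$ and $|W| \le 4/\rho^2$ on $B(\bar y, A\rho)$. Using $|\Rm|^2 \le 16/\rho^4 + 2n(n-1)$ there together with $(\Lambda,k)$-regularity yields $|\nabla^j W| = |\nabla^j\Rm| \le C_{n,j}/((A\rho)^j\rho^2)$ on $B(\bar y, A\rho/2)$; the mean-value step from the proof of Theorem \ref{volcur} then produces a dimensional constant $\delta_n$ with $|W| \ge 1/(2\rho^2)$ on $B(\bar y, \delta_n A\rho)$. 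The Gray--Vanhecke asymptotic expansion at $\bar y$, applied to both $(M,g)$ and $\bS^n$, cancels scalar and Ricci terms (shared Einstein constant) and leaves only the $|W|^2$ discrepancy at order $s^4$:
\begin{equation*}
V(s) - \vol B(\bar y, s) = \frac{\omega_n\, s^{n+4}}{120(n+2)(n+4)}\, |W(\bar y)|^2 + R(s),
\end{equation*}
with $|R(s)|$ controlled by $C_n (s/(A\rho))^{n+6}\rho^n$ via the higher-order derivative estimates. Choosing $s = \eta_n A\rho$ for $\eta_n$ a small dimensional constant makes the main term dominate the remainder, and combining with the Bishop--Gromov upper bound $V(s) - \vol B(\bar y, s) \le \eps_n V(s) \le C_n' \eps_n (A\rho)^n$ forces $A^4 \le C_n'' \eps_n$. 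Taking $A$ as large as the constraint $s = \eta_n A\rho \le \pi$ permits (so $A \lesssim \sqrt K$) yields the quantitative estimate $K \le C_n''' \sqrt{\eps_n}$.

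The main obstacle is promoting the smallness bound $\max_M |W| \le C_n'''\sqrt{\eps_n}$ to the strict equality $W \equiv 0$; the Gray--Vanhecke step alone is not sharp enough. The argument is closed by combining this bound with the Bochner-type equation $\nabla^*\nabla W + \cour(W) = 0$ satisfied by the Weyl tensor on an Einstein manifold (obtained as in the proof of Proposition \ref{prop:regularcritical}) and the classical infinitesimal rigidity of $\bS^n$ among Einstein metrics (Berger--Koiso): the full $(\Lambda,k)$-regularity upgrades our $L^\infty$ bound on $W$ to $C^{k,\alpha}$-proximity of $(M,g)$ to $(\bS^n, g_{\bS^n})$ modulo diffeomorphism, at which point classical local rigidity forces $W = 0$. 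Equivalently, a compactness-and-contradiction argument works: any sequence of counterexamples would satisfy $\max|W_k| \to 0$, Cheeger--Gromov compactness (supplied by our regularity bounds) would produce a smooth limit isometric to $\bS^n$, and local rigidity at $\bS^n$ would furnish the contradiction for large $k$.
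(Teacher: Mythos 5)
Your overall strategy (locate an almost-maximal point of $|\We|$, run the Gray--Vanhecke expansion there, compare with Bishop--Gromov, and then invoke a gap phenomenon to pass from ``$|\We|$ small'' to $\We\equiv 0$) is the same as the paper's, but you run it in the opposite order and with much heavier tools, and the quantitative half contains a real flaw. The paper establishes the gap \emph{first} and elementarily: the Weyl tensor of an Einstein metric satisfies $\nabla^*\nabla \We+\frac{2\scal_g}{n}\We=\We*\We$, and evaluating $\Delta|\We|^2\ge 0$ at the point where $|\We|$ is maximal gives in two lines that either $\We\equiv 0$ or $\max_M|\We|\ge 2(n-1)/c(n)$. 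This pins $\rho=(\max|\We|)^{-1/2}$ below a dimensional constant, so the Gray--Vanhecke computation only ever has to be carried out at a definite scale $s\le\delta_n\rho$ with all derivative bounds $|\nabla^j\We|\le C(n)/\rho^j$ available from regularity; Bishop--Gromov then yields a \emph{definite} volume deficit $\delta_n^4/(240(n+2)(n+4))$, and one simply takes $\eps_n$ smaller than that. No quantitative estimate in the small-Weyl regime is needed at all.

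Two concrete problems with your version. First, the derivative estimate $|\nabla^j\We|=|\nabla^j\Rm|\le C_{n,j}/((A\rho)^j\rho^2)$ does not follow from $(\Lambda,k)$-regularity in the regime that matters: the regularity hypothesis is normalized by $\sup|\Rm|$, and here $|\Rm|^2=|\We|^2+2n(n-1)\ge 2n(n-1)$, so when $K=\max|\We|$ is small one has $\rho\gg 1>\pi\ge\diam(M)$, the ball $B(\bar y,A\rho)$ makes no sense, and regularity only yields $|\nabla^j\Rm|\lesssim 1$ at unit scale rather than the extra factor $\rho^{-2}$ you claim. Consequently your remainder bound on $R(s)$ and the exponent in $K\le C\sqrt{\eps_n}$ are not justified; a corrected bookkeeping (working at scale $s\lesssim K$) still produces $K\le C\eps_n^{\alpha}$ for some $\alpha>0$, so this is repairable, but as written the step fails. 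Second, your closing move --- Berger--Koiso infinitesimal rigidity plus an upgrade of the $L^\infty$ bound on $\We$ to $C^{k,\alpha}$-proximity of $(M,g)$ to the round sphere modulo diffeomorphism, or alternatively Cheeger--Gromov compactness and contradiction --- is a valid route in principle, but the proximity/compactness statement is itself a substantial theorem left unproved, and it is exactly the Bochner equation you cite (in its correct inhomogeneous form, with the quadratic term $\We*\We$ on the right) that renders all of it unnecessary: applied at the maximum of $|\We|$ it gives the gap directly. So the proposal is morally sound but is not the paper's proof, has one broken estimate, and delegates its crucial final step to machinery the paper replaces with a maximum-principle computation.
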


Perhaps, there is a nice optimal volume pinching for Einstein metric with positive scalar curvature, a nice result in this direction has been proved
by M. Gursky (\cite{Gursky}) any non standard Einstein metric $g$ on the sphere $\bS^4$ must satisfies
$$\frac{\vol(\bS^4,g)}{\vol \bS^4}\le \frac13\,\,\,.$$
The same proof will also prove a local version of this result : for $r\in [0,\pi]$, we denote by $V_1(r)$ the volume of a geodesic ball in $\bS^n$ :
$$V_1(r)=\vol (\bS^{n-1})\int_0^r (\sin(t))^{n-1}dt.$$
\begin{thm}
There is a $\eps_n>0$ such that if $(M^n,g)$ is closed Einstein manifold with positive scalar curvature:
$$\ricci_g=(n-1)g$$
 and such that for some $r\in (0,\pi]$ and all $x\in M$ :
 $$\frac{\vol(B(x,r))}{V_1(r)}\ge 1-\eps_nr^4$$
 then $(M,g)$ has constant sectional curvature.
\end{thm}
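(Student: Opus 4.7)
The plan is to adapt the argument of Theorem \ref{vol_rig} to the local setting, combining the Gray--Vanhecke volume expansion, Bishop--Gromov comparison with $\bS^n$, and a Bochner-type pinching on the closed Einstein manifold.

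Suppose for contradiction that $(M,g)$ does not have constant sectional curvature. Since $\ricci_g=(n-1)g$, the Weyl decomposition gives $|\Rm|^2=|W|^2+2n(n-1)$, so the assumption is equivalent to $W\not\equiv 0$; by compactness of $M$, $|W|^2$ attains a positive supremum $\sigma^2$. By \pref{prop:regularcritical} (applied with $\cour(\ricci_g)=0$), the Einstein metric $g$ is $(\Lambda,k)$-regular for every $k$, which combined with the a priori bound on $|\Rm|$ gives uniform bounds on the covariant derivatives $|\nabla^j\Rm|$ on $M$ in terms of $n$ and $\sigma$.

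At every $y\in M$, the Gray--Vanhecke expansion, simplified by the fact that for our Einstein metric the invariants $\scal_g=n(n-1)$, $|\ricci_g|^2=n(n-1)^2$ and $\Delta\scal_g=0$ match those of $\bS^n$, reads
\begin{equation*}
\frac{\vol B(y,s)}{V_1(s)}=1-c_n|W(y)|^2 s^4+\tau(y,s),\qquad |\tau(y,s)|\le D_n s^6,
\end{equation*}
for small $s$, with $c_n=1/(120(n+2)(n+4))$ and $D_n$ depending only on $n$ (via the regularity bounds above). The Bishop--Gromov inequality (with $\ricci\ge (n-1)g$ and sphere model) shows $\vol B(y,\cdot)/V_1(\cdot)$ is non-increasing, so combining with the hypothesis gives, for every $s\le r$,
\begin{equation*}
c_n|W(y)|^2 s^4\le \eps_n r^4+D_n s^6.
\end{equation*}
Optimizing in $s\in(0,r]$ (take $s^6\sim \eps_n r^4/D_n$, or $s=r$ when this natural choice exceeds $r$) and using $r\le\pi$ produces the uniform pointwise bound $\sup_M|W|^2\le C_n\eps_n^{1/3}$ for some dimensional $C_n$.

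To close the argument and deduce $W\equiv 0$, invoke the Bochner-type formula $\nabla^*\nabla\Rm+\cour(\Rm)=0$ satisfied by the (harmonic) Riemann tensor of an Einstein metric, and integrate $\tfrac12\Delta|\Rm|^2=|\nabla\Rm|^2+\langle\cour(\Rm),\Rm\rangle$ over $M$. Using $\nabla\Rm=\nabla W$ and the decomposition $\Rm=W+\Rm_{\bS^n}$, the Weitzenbock term can be written $-\langle\cour(\Rm),\Rm\rangle=\alpha_n|W|^2-Q(W)$, with $\alpha_n>0$ dimensional and $|Q(W)|\le K_n|W|^3$; integration yields
\begin{equation*}
\int_M\bigl(|\nabla W|^2+(\alpha_n-K_n\sup_M|W|)|W|^2\bigr)\,d\vol\le 0,
\end{equation*}
which forces $W\equiv 0$ as soon as $\sup_M|W|<\alpha_n/K_n$. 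Choosing $\eps_n$ small enough that $C_n\eps_n^{1/3}\le(\alpha_n/K_n)^2/4$ contradicts $W\not\equiv 0$ and completes the proof. The main obstacle is this final pinching step: bridging the quantitative estimate $\sup|W|^2\lesssim \eps_n^{1/3}$ obtained from Gray--Vanhecke and Bishop--Gromov with the exact rigidity $W\equiv 0$; the positive Ricci lower bound $\ricci\ge(n-1)g$ from the Einstein condition is essential there.
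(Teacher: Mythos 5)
Your overall strategy --- the Gray--Vanhecke expansion of $\vol B(y,s)/V_1(s)$, Bishop--Gromov monotonicity to push the hypothesis at radius $r$ down to small radii, and the Bochner/isolation estimate for the Weyl tensor of an Einstein metric --- is the same circle of ideas as the paper's, but in a different order. The paper first uses the pointwise maximum principle on $\nabla^*\nabla\We+\frac{2\scal_g}{n}\We=\We*\We$ to get the gap ``either $\We\equiv0$ or $\max_M|\We|\ge 2(n-1)/c(n)$'', and then works only at the maximum point of $|\We|$, at the curvature scale $s=\delta_n\rho$ with $\rho^{-2}=|\We(x)|$, to produce a volume deficit contradicting the hypothesis. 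You instead try to extract a global smallness $\sup_M|\We|^2\le C_n\eps_n^{1/3}$ at every point and then close with the integrated isolation inequality; that final integral step is correct and is just the integrated form of the paper's pointwise argument.

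The gap is in the middle step. You assert $|\tau(y,s)|\le D_ns^6$ with $D_n$ ``depending only on $n$'', but in the preceding sentence you correctly note that the derivative bounds $|\nabla^j\Rm|$ controlling the $s^6$-coefficient depend on $\sigma=\sup_M|\We|$. They genuinely do: the weight-six invariants entering that coefficient (terms like $|\Rm|^3$, $|\nabla\Rm|^2$, $\langle\Rm,\nabla^2\Rm\rangle$) scale like $\sigma^3$, and nothing in the hypotheses gives an a priori bound on $\sigma$. So in fact $D_n=D_n(\sigma)$, your optimized bound reads $\sup|\We|^2\le C_n(\sigma)\,\eps_n^{1/3}$, and the final requirement $C_n\eps_n^{1/3}\le(\alpha_n/K_n)^2/4$ would force $\eps_n$ to depend on the manifold --- the argument is circular precisely when $\sigma$ is large. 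The repair is to measure the volume at the curvature scale first: at a point where $|\We|\ge\sigma/2$, take $s=\min(r,\delta\rho)$ with $\rho^{-2}=\max(\sigma,1)$; after this rescaling the deficit is $\gtrsim c_n\delta^4$ while the error is $\le D_n'\delta^6$ with $D_n'$ purely dimensional, which yields $\sigma\le C_n$ once $\eps_n$ is small. Only then is your optimization with a dimensional $D_n$ legitimate. This rescaling at the (near-)maximum point is exactly the device the paper uses, and your write-up is missing it.
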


These theorems are consequence of a result of M. Anderson and of the isolation of the round metric amongst Einstein metric.
Indeed, a consequence of Anderson's result (\cite[theorem 1.2]{And_inv}) is the following :

  For $\delta>0$, we can choose $\epsilon(n,\delta)>0$ such that 
the hypothesis $$\ricci_g=(n-1)g\,\,\,  \mathrm{and }\,\,\, \frac{\vol(M,g)}{\vol \bS^n}\ge 1-\eps(n,\delta)$$
implies that  the sectional curvature  of $g$ are in a interval $(1-\delta, 1+\delta)$. Now according to \cite{huisken}, \cite{margerin}, \cite{BW},\cite{Brendle},
 we know that a Einstein metric with  sectional curvature in the interval $(\frac12, 2)$ has constant sectional curvature.
 If we don't care about the optimal value of the pinching condition such a rigidity result can be easily proven with the maximum principle.
 
 Indeed the Weyl tensor $\We$ of an Einstein metric satisfies a Bochner formula (\cite[Proposition 4.2]{Bourguignon_inv},\cite{Singer}):
 $$\nabla^*\nabla \We+\frac{2\scal_g}{n}\We=\We*\We$$
 where $\We*\We$ is a  quadratic expression in the Weyl tensor.
 Hence if $\ricci_g=(n-1)g$, we obtain that the length of the Weyl tensor satisfies :
 $$\Delta|\We|^2+4(n-1)|\We|^2=2\langle\We, \We*\We\rangle-2|\nabla \We|^2$$ Hence at a point where the  length of the Weyl tensor reaches its maximum, we have :
 $$4(n-1)|\We|^2\le \Delta|\We|^2+4(n-1)|\We|^2=2\langle\We, \We*\We\rangle\le c(n) |\We|^3.$$
 Hence either $\We=0$ or $\max_{x\in M} |\We(x)|\ge \frac{2(n-1)}{c(n)}.$
\begin{proof}
 We use again the same idea to proved the above theorems. Assume that $(M^n,g)$ is a closed Einstein manifold with positive scalar curvature:
$$\ricci_g=(n-1)g$$
and that the sectional curvature of $g$ are not constant, then we know that 
$$\max_M |\We| \ge  \frac{2(n-1)}{c(n)}.$$
Let $x\in M$ be a point where this maximum is reached :
$$\frac{1}{\rho^2}=|\We(x)|=\max_M |\We|.$$
By regularity, we obtain estimates on all the covariant derivative of the Weyl tensor : for $j\in \{1,\,\hdots,\, 7\}$
$$\max_M |\nabla^j\We|\le C(n)\frac{1}{\rho^j}$$
(Recall that the diameter of $M$ is bounded by $\pi$ and that $\rho^2\le  \frac{2(n-1)}{c(n)}.$)

The same argument using the computations of Gray and Vanhecke show that for some constant $\delta_n>0$
and for all $s\in (0,\delta_n \rho)$ :
$$\frac{\vol(B(x,s))}{V_1(s)}\le 1-\frac{1}{240(n+2)(n+4)}\left(\frac{s}{\rho}\right)^4.$$

Then the Bishop-Gromov comparison principle implies then that :
$$\frac{\vol(M,g)}{\vol \bS^n}=\frac{\vol(B(x,\pi))}{V_1(\pi)}\le \frac{\vol(B(x,\delta_n \rho))}{V_1(\delta_n \rho)}\le 1-\frac{\delta_n^4}{240(n+2)(n+4)}.$$
 It also implies that for $r\in (\delta_n \rho, \pi]$
 $$\frac{\vol(B(x,r))}{V_1(r)}\le 1-\frac{\delta_n^4}{240(n+2)(n+4)}\le 1-\frac{\delta_n^4\, r^4}{240(n+2)(n+4)\pi^4}.$$
 and because $\rho^2\le  \frac{2(n-1)}{c(n)}$, we have a constant $\eta_n$ such that for all 
 $r\in (0, \pi]$ :
 $$\frac{\vol(B(x,r))}{V_1(r)}\le 1-\eta_n r^4.$$

\end{proof} 
 \subsection{Another rigidity result}
 The same argument can be used to prove a volume rigidity result when the scalar curvature is zero and when the second term in the asymptotic expansion in the volume of geodesic balls has a definite sign\,: 
 \begin{thm}There is a constant $\eps_n>0$, such that when $(M^n,g)$ is a complete locally conformally flat manifold with zero scalar curvature of dimension $n\ge 4$ 
 such that for some $v>0$ :
 $$\forall x\in M, \,\, \forall r>0\,\,\,\,\, :\,\,\, vr^n\le \vol B(x,r)\le \omega_nr^n(1+\eps_n v^4)$$
 then $(M^n,g)$ is isometric to the Euclidean space $\R^n$.
\end{thm}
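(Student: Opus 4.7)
The plan is to mirror the proof of \tref{volcur} with two adjustments. First, a locally conformally flat metric with constant scalar curvature has harmonic Riemann tensor, so it is $(\Lambda,k)$-regular for all $k$ with $\Lambda = \Lambda(n,k)$. Second, for such a metric with $\scal = 0$ the Kulkarni--Nomizu decomposition of $\Rm$ in terms of $\ricci$ forces $|\Rm|^2 = \frac{4}{n-2}|\ricci|^2$, so the Gray--Vanhecke asymptotic expansion of $\vol B(\bar x,s)$ has a \emph{positive} fourth-order coefficient when $n \ge 4$: the $\scal$-term vanishes by hypothesis and the remaining coefficient is a positive multiple of $(8n-28)|\ricci(\bar x)|^2$.

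Assume for contradiction that $|\Rm|\not\equiv 0$. Apply the point selection lemma \pref{PSL} to $\varphi = |\Rm|$ with a suitable large $A$: one obtains $\bar x \in M$ and $\rho > 0$ with $|\Rm(\bar x)| = 1/\rho^2$ and $\sup_{B(\bar x,A\rho)}|\Rm| \le 4/\rho^2$. The $(\Lambda,k)$-regularity estimates (applied on $\bar g = \exp_{\bar x}^*g$) then give $|\nabla^j\Rm| \le C(n)/\rho^{j+2}$ for $j \le 7$ on a smaller ball. Feeding these bounds into the Jacobi equation along radial geodesics from $\bar x$ and applying Gray--Vanhecke exactly as in the proof of \tref{volcur}, one obtains for $s$ small compared to $\rho$
$$\vol_{\bar g}\bB(s) = \omega_n s^n\left(1 + c_n \frac{s^4}{\rho^4} + \delta(s)\right), \quad |\delta(s)| \le D_n\frac{s^6}{\rho^6},$$
with $c_n>0$ and $D_n>0$ depending only on $n$.

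To compare $\vol_{\bar g}\bB(s)$ with the actual Riemannian volume $\vol B(\bar x,s)$ we need $s$ bounded above by the injectivity radius at $\bar x$. Cheeger's injectivity radius lemma, combined with the curvature bound on $B(\bar x,A\rho)$ and the uniform lower volume bound $\vol B \ge v r^n$, yields an estimate of the form $\mathrm{inj}(\bar x) \ge c_0(n) v \rho$. Choosing $s = c_0(n)v\rho/2$, which is automatically below the error-control threshold once $v$ is small, the expansion together with the upper volume hypothesis forces
$$\omega_n s^n\left(1 + \frac{1}{2}c_n c_0(n)^4 v^4\right) \le \vol B(\bar x,s) \le \omega_n s^n(1+\eps_n v^4).$$
This is a contradiction as soon as $\eps_n < \frac{1}{2}c_n c_0(n)^4$. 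Therefore $\Rm \equiv 0$, and completeness plus $\vol B(x,r) \ge v r^n$ for all $r$ identifies $(M,g)$ with $\R^n$. The delicate point is that the \emph{linear} dependence of Cheeger's injectivity radius estimate on the volume constant $v$ is exactly what matches the exponent $4$ in the hypothesis $\eps_n v^4$; with a worse power of $v$ one would be forced to let $\eps_n$ depend on $v$ as well.
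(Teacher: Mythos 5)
Your proposal is correct and follows essentially the same route as the paper: point selection for $|\Rm|$, the $(\Lambda,7)$-regularity coming from the harmonicity of the curvature of a scalar-flat locally conformally flat metric, Cheeger's injectivity radius bound $\mathrm{inj}_{\bar x}\ge \eta_n v\rho$, and the Gray--Vanhecke expansion whose fourth-order coefficient $\frac{2n-7}{90(n^2-4)(n+4)}|\ricci|^2$ is positive for $n\ge 4$, contradicting the upper volume hypothesis. Your explicit use of the Kulkarni--Nomizu identity $|\Rm|^2=\frac{4}{n-2}|\ricci|^2$ to link the point-selection quantity to the $|\ricci|^2$ coefficient, and your closing remark on why the power $v^4$ is forced by the linear dependence of the injectivity radius on $v$, are welcome clarifications of steps the paper leaves implicit.
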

\begin{proof}
Indeed at a $4$ almost maximal point of the length of the Riemann curvature tensor, we have
$$ |\Rm(x)|=\frac{1}{\rho^2}\,\,\,  \mathrm{and} \,\,\, \max_{B(x,\rho)} |\Rm|\le \frac{4}{\rho^2}.$$
because $\vol B(x,r)\ge vr^n$, then Cheeger's estimate of the injectivity radius (\cite{Cheeger},\cite[theorem 4.2]{CGT})
implies that the injectivity radius at $x$ is bounded from below :
$$\mathrm{inj}_x\ge \eta_n v \rho.$$
Again if we denote by $\bB(s)$  the Euclidean ball of radius
$s$ in $(T_xM,g_{x})$ then
$$\exp_{x}\,:\, \bB(\eta_n v \rho)\rightarrow B(y,\eta_n v \rho)$$ is a diffeomorphism (Note that
our hypothesis implies in particular that 
$v\le \omega_n$). Hence  for $\mathbf{g}=\exp_{c}^*g$,
we get for all $r\le\eta_n v  \rho$: 
$$\vol B(y,r)=\vol_{\mathbf{ g}} \bB(r).$$

When the metric is locally conformally flat with zero scalar curvature, Gray\&Vanhecke's computation is that 
$$\vol B(x,r)=\omega_n r^n\left(1+\frac{2n-7}{90(n^2-4)(n+4)}|\ricci_g(x)|^2r^4+O(r^6)\right)$$
The same arguments implies that for some $\delta_n>0$ and $\eps_n>0$ , we have for $s=\delta_n v \rho$
$$\vol B(x,s)\ge \omega_n s^n\left(1+v^4\eps_n\right).$$
\end{proof}
\begin{rem} Using the \cite[Corollary 3.4]{GrayVan} in dimension $3$,
the same proof furnishes that there is a $\eps(\Lambda)>0$ such that if $(M,g)$ is complete $(\Lambda,7)$ regular $3$-manifold with zero scalar curvature such that 
$$\forall x\in M,\,\, \forall r\ge 0 \,\,:\,\, \vol B(x,r)\ge \omega_nr^3(1-\eps),$$ then $(M,g)$ is isometric to the Euclidean space $\R^3$.
\end{rem}

\end{document}